\newcommand{\p}{\mathfrak{p}}
\newcommand{\FF}{\mathbb{F}}
\newcommand{\NN}{\mathbb{N}}
\newcommand{\ds}{\displaystyle}
\newcommand{\m}{\mathfrak{m}}
\newcommand{\q}{\mathfrak{q}}
\renewcommand{\a}{\mathfrak{a}}
\renewcommand{\leq}{\leqslant}
\renewcommand{\geq}{\geqslant}
\theoremstyle{definition}
\newtheorem{theorem}{Theorem}[section]
\newtheorem{theoremx}{Theorem}
\numberwithin{equation}{section}
\newtheorem{question}[theorem]{Question}
\newtheorem{corollary}[theorem]{Corollary}
\newtheorem{lemma}[theorem]{Lemma}
\newtheorem{proposition}[theorem]{Proposition}
\theoremstyle{definition}
\newtheorem{definition}[theorem]{Definition}
\newtheorem{example}[theorem]{Example}
\newtheorem{remark}[theorem]{Remark}
\newcommand{\hght}{\operatorname{ht}}
\newcommand{\mf}[1]{\mathfrak #1}
\newcommand{\Ass}{\operatorname{Ass}}
\DeclareMathOperator{\HH}{H}
\newcommand{\tpar}{\tau_{\rm par}}
\newcommand{\ps}[1]{\llbracket {#1} \rrbracket}
\title{Tight closure of products and F-rational singularities}
\author{Alessandro De Stefani}
\address{Dipartimento di Matematica, Universit{\`a} di Genova, Via Dodecaneso 35, 16146 Genova, Italy}
\email{alessandro.destefani@unige.it}
\author{Ilya Smirnov}
\address{BCAM -- Basque Center for Applied Mathematics, Mazarredo 14, 48009 Bilbao, Basque Country -- Spain}
\address{
Ikerbasque -- Basque Foundation for Science, Plaza Euskadi 5, 48009 Bilbao, Basque Country -- Spain
}
\email{ismirnov@bcamath.org}
\begin{document}
\begin{abstract} We prove a characterization of F-rationality in terms of tight closure of products of parameter ideals. Our results are inspired by the theory of complete ideals for surfaces and, in particular, the fundamental results of Lipman--Teissier and Cutkosky characterizing rational surface singularities in terms of products of complete ideals, 
but are valid also in higher dimensions. 
\end{abstract}
\maketitle

\section{Introduction}

The study of integrally closed\footnote{The original terminology was \emph{complete ideals}.} ideals in two-dimensional regular rings was initiated by Zariski \cite[Appendix~5]{ZariskiSamuel} and continued by Lipman \cite{Lipman}. Most notably, Lipman showed that integrally closed ideals on a rational surface singularity enjoy the same properties as integrally closed ideals in a regular local ring. 

\begin{theorem}{\cite[Theorem 7.1]{Lipman}}\label{Lipman} Let $(R,\m)$ be a normal two-dimensional local domain. If $R$ has rational singularities, then for any two integrally closed ideals $I,J$ one has $\overline{IJ}=IJ$.
\end{theorem}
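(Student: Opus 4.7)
My plan is to work on a common log resolution of $I$ and $J$, identify the integral closure of the product with global sections of a line bundle, and reduce the equality to a cohomological vanishing statement implied by the rationality hypothesis.

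First I would choose a projective birational morphism $\pi:X\to\Spec(R)$ from a regular surface $X$ such that $I\mathcal{O}_X=\mathcal{O}_X(-D)$ and $J\mathcal{O}_X=\mathcal{O}_X(-E)$ are invertible, with $D,E$ effective exceptional divisors; such a resolution exists in dimension two. Since $R$ is normal and $I,J$ are integrally closed, standard valuation-theoretic arguments identify
\[
I=H^0\bigl(X,\mathcal{O}_X(-D)\bigr),\qquad J=H^0\bigl(X,\mathcal{O}_X(-E)\bigr),
\]
and force $D,E$ to be anti-nef with respect to $\pi$. Because $(IJ)\mathcal{O}_X=\mathcal{O}_X(-D-E)$ and $D+E$ is again anti-nef, the same theory yields $\overline{IJ}=H^0(X,\mathcal{O}_X(-D-E))$. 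The containment $IJ\subseteq\overline{IJ}$ is automatic, so the problem reduces to showing surjectivity of the multiplication map
\[
\mu\colon H^0\bigl(X,\mathcal{O}_X(-D)\bigr)\otimes_R H^0\bigl(X,\mathcal{O}_X(-E)\bigr)\longrightarrow H^0\bigl(X,\mathcal{O}_X(-D-E)\bigr),
\]
whose image is precisely $IJ$.

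To convert surjectivity of $\mu$ into a cohomological statement, I would pick generators $f_1,\ldots,f_k$ of $I$ (which simultaneously generate $\mathcal{O}_X(-D)$ globally) and form the presentation
\[
0\to\mathcal{K}\to\mathcal{O}_X^{\oplus k}\xrightarrow{(f_1,\ldots,f_k)}\mathcal{O}_X(-D)\to 0.
\]
Twisting by the locally free sheaf $\mathcal{O}_X(-E)$ and taking cohomology gives the exact piece
\[
J^{\oplus k}\longrightarrow H^0\bigl(X,\mathcal{O}_X(-D-E)\bigr)\longrightarrow H^1\bigl(X,\mathcal{K}(-E)\bigr),
\]
in which the first arrow has image $\sum_i f_i J=IJ$. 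Thus the theorem reduces to the vanishing $H^1(X,\mathcal{K}(-E))=0$.

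The main obstacle is exactly this vanishing, and it is precisely at this point that rationality must be used. The input one gets from rationality is that $R^1\pi_*\mathcal{O}_X=0$, which, via the long exact sequence attached to $0\to\mathcal{O}_X(-Z)\to\mathcal{O}_X\to\mathcal{O}_Z\to 0$ together with the classical fact that $H^1(Z,\mathcal{O}_Z)=0$ for every effective exceptional cycle $Z$, yields the stronger vanishing $R^1\pi_*\mathcal{O}_X(-Z)=0$ for every effective anti-nef exceptional divisor $Z$. The subtle point is that $\mathcal{K}(-E)$ is not directly of the form $\mathcal{O}_X(-Z)$ with $Z$ anti-nef: already in the two-generator case one has $\mathcal{K}\cong\mathcal{O}_X(D)$ and the required vanishing becomes $H^1(X,\mathcal{O}_X(D-E))=0$, in which $D-E$ has no immediate nefness property. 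Handling this delicate point — either by filtering $\mathcal{K}$ according to the Koszul relations among the $f_i$ into pieces to which the above vanishing applies, or by dominating $X$ by a further resolution on which the relevant divisor becomes anti-nef, or by a direct intersection-theoretic argument exploiting the negative-definiteness of the exceptional intersection form on a rational singularity — is the technical heart of the proof.
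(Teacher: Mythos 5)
The paper you are working from does not actually prove this statement: it is quoted verbatim from Lipman's paper as background (Theorem 7.1 of \cite{Lipman}), so the comparison can only be with the known argument there. Your setup is the right one and matches the classical approach: taking a resolution on which $I\mathcal{O}_X=\mathcal{O}_X(-D)$ and $J\mathcal{O}_X=\mathcal{O}_X(-E)$ are invertible, identifying $I$, $J$, $\overline{IJ}$ with $H^0$ of the corresponding anti-nef invertible sheaves, and reducing the theorem to surjectivity of the multiplication map on global sections is all correct and standard.

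However, the proposal stops exactly where the theorem begins. The surjectivity of $\mu$, equivalently the vanishing $H^1(X,\mathcal{K}(-E))=0$ in your presentation, is the entire content of Lipman's theorem and the only place where rationality is used, and you do not prove it; you name three possible strategies without carrying out any of them, and none works as stated. The sheaf $\mathcal{K}(-E)$ (already $\mathcal{O}_X(D-E)$ for two generators) has no anti-nefness, so the vanishing $R^1\pi_*\mathcal{O}_X(-Z)=0$ for anti-nef $Z$ that you correctly extract from rationality does not apply to it; passing to a further blow-up does not make $D-E$ anti-nef; and filtering $\mathcal{K}$ by Koszul syzygies again produces twists with no positivity. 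Lipman's actual argument does not run the naive Koszul sequence on all generators: it combines the vanishing $H^1(X,\mathcal{L})=0$ for invertible sheaves generated by global sections with a separate device (a general-section/contractedness argument reducing the surjectivity of $H^0(\mathcal{L})\otimes H^0(\mathcal{M})\to H^0(\mathcal{L}\otimes\mathcal{M})$ to cohomology of sheaves supported on one-dimensional fibers, exploiting the structure theory of rational surface singularities). Until you supply an argument of this kind, the proof is incomplete: what you have is a correct reduction of the theorem to its essential, unproved, cohomological core.
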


Lipman's paper was very influential and led to many important developments. Among these, we find characterizations
of rational surface singularities by the properties of integrally closed ideals.

The definition of rational surface singularities is originally given by the vanishing of certain cohomology in a resolution of singularities. In order to address the absence of resolutions in positive characteristic, in \cite{LipmanTeissier}
Lipman and Teissier defined the class of pseudo-rational singularities. 
The main result of \cite{LipmanTeissier} is the Brian{\c c}on--Skoda theorem for pseudo-rational singularities and 
that, in fact, the Brian{\c c}on--Skoda 
theorem is a criterion for a surface singularity to be pseudo-rational.

\begin{theorem}{\cite[Proposition~5.5]{LipmanTeissier}}
If $(R, \m)$ be analytically unramified two-dimensional normal local ring. 
Then $R$ is pseudo-rational if and only if, for every $\m$-primary ideal $I$, one has $\overline{I^2} = I \overline{I}$. 
\end{theorem}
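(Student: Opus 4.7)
The plan is to prove the two directions separately: the forward direction will use the Skoda form of the Brian{\c c}on--Skoda theorem for pseudo-rational rings, while the converse will translate the algebraic identity into the vanishing of the first normalized Hilbert coefficient on every parameter ideal.

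Assume first that $R$ is pseudo-rational. The containment $I\overline{I}\subseteq \overline{I^2}$ holds in any ring: if $y\in \overline{I}$ satisfies $y^n+a_1y^{n-1}+\cdots+a_n=0$ with $a_i\in I^i$, then for any $x\in I$ the element $xy$ satisfies an integral equation with coefficients $x^i a_i\in (I^2)^i$. For the reverse inclusion, I would invoke the Skoda-type statement for pseudo-rational rings (a main result of Lipman--Teissier): if $J\subseteq I$ is a minimal reduction generated by $\ell=\ell(I)$ elements, then $\overline{I^n}\subseteq J\overline{I^{n-1}}$ for $n\geq \ell$. Since $\dim R=2$ forces $\ell(I)\leq 2$ for $\m$-primary $I$, taking $n=2$ yields $\overline{I^2}\subseteq J\overline{I}\subseteq I\overline{I}$, which is the content of the forward implication.

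Conversely, assume $\overline{I^2}=I\overline{I}$ for every $\m$-primary ideal $I$. Applying the hypothesis to $I=J^n$ for a parameter ideal $J$ gives $\overline{J^{2n}}=J^n\overline{J^n}$; iterating and also applying the hypothesis to test ideals of the form $\overline{J^n}$ should produce $\overline{J^{n+1}}=J\overline{J^n}$ for every $n\geq 1$. Equivalently, the normal Hilbert filtration $\{\overline{J^n}\}$ has reduction number at most one with respect to $J$. By a theorem of Itoh, this forces the normalized Hilbert coefficient $\overline{e}_1(J)=0$ for every parameter ideal $J$; under the standing hypotheses (analytically unramified, normal, two-dimensional, hence Cohen--Macaulay by Serre's criterion), this vanishing is equivalent to pseudo-rationality.

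The main obstacle will be the induction in the converse: the hypothesis directly concerns only squares, so one has to bootstrap it carefully to obtain $\overline{J^{n+1}}=J\overline{J^n}$ for every $n$, rather than merely for even powers. After that, one must invoke Itoh's dictionary between the Hilbert coefficient $\overline{e}_1(J)$ and the cohomology $H^1(X,\mathcal{O}_X)$ of the normalized blowup $f\colon X\to \Spec R$ in order to conclude pseudo-rationality from the vanishing of $\overline{e}_1$ on all parameter ideals.
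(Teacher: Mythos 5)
A preliminary remark: this statement is quoted from Lipman--Teissier as background, and the paper itself contains no proof of it; so your proposal can only be compared with the arguments available in the literature and with the mechanism the paper alludes to in its closing section (the combination of the Brian{\c c}on--Skoda theorem for pseudo-rational rings with the Huneke--Itoh intersection theorem).

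Your forward direction is essentially sound, with two caveats. Citing a ``Skoda form'' $\overline{I^n}\subseteq J\,\overline{I^{n-1}}$ is dangerously close to circular, since in dimension two that refined statement is more or less the implication being proved; the clean route is to pass to an infinite residue field (so that a two-generated minimal reduction $J=(x,y)$ of $I$ exists, checking that pseudo-rationality and the desired equality descend), apply the Lipman--Teissier Brian{\c c}on--Skoda theorem in the crude form $\overline{I^2}=\overline{J^2}\subseteq J$, and then upgrade this to $\overline{J^2}\subseteq J\overline{J}$ either by the regular-sequence (determinant) trick or by the Huneke--Itoh theorem $\overline{J^2}\cap J=J\overline{J}$, which applies because $R$ is analytically unramified and $x,y$ is a system of parameters; this gives $\overline{I^2}\subseteq J\overline{I}\subseteq I\overline{I}$, and the reverse containment is trivial as you say.

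The converse, however, has a genuine gap, and not only the bootstrap you flag. First, from the hypothesis one directly gets only statements such as $\overline{J^{2n}}=J^n\overline{J^n}$ and $\overline{J^{2n}}=\overline{J^n}\cdot\overline{J^n}$, and it is not clear that any amount of ``iterating'' produces $\overline{J^{n+1}}=J\overline{J^n}$ for all $n$; this step is simply not done. Second, and more seriously, the Hilbert-coefficient dictionary is misstated: normal reduction number at most one for a parameter ideal $J$ in a two-dimensional Cohen--Macaulay ring yields $\overline{e}_1(J)=\ell(\overline{J}/J)$ and $\overline{e}_2(J)=0$, not $\overline{e}_1(J)=0$. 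Since $\overline{e}_1(J)\geq \ell(\overline{J}/J)$, demanding $\overline{e}_1(J)=0$ for all parameter ideals would force every parameter ideal to be integrally closed, which characterizes regular local rings and fails already for rational double points; so the equivalence you invoke is false as stated. Even after correcting $\overline{e}_1$ to $\overline{e}_2$, the assertion ``$\overline{e}_2(\q)=0$ for all parameter ideals $\q$ if and only if $R$ is pseudo-rational'' is itself a substantial theorem in the Itoh/Okuma--Watanabe--Yoshida circle of ideas, of essentially the same depth as the proposition, and cannot be waved in as a citation. The argument that actually exists in the literature is cohomological: the hypothesis applied to a parameter ideal $\q=(x,y)$ gives $\overline{\q^2}=\q\overline{\q}\subseteq\q$, i.e. the Brian{\c c}on--Skoda property, and Lipman--Teissier show by a direct local cohomology computation on blowups that in dimension two this property forces $H^2_\m(R)\to H^2_E(X,\mathcal{O}_X)$ to be injective for every normal modification $X$, which is pseudo-rationality. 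Without either that argument or a fully worked-out (and corrected) Hilbert-coefficient argument, your converse does not go through.
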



It should be noted that due to the existence of resolutions \cite{LipmanResolution}, pseudo-rationality and rationality coincide for surface singularities \cite[p.103, Example (a)]{LipmanTeissier}, so 
we get an ideal-theoretic characterization of rational surface singularities. 
This work was ``relativized'' by Okuma, Watanabe, and Yoshida in a series of papers started by \cite{OWY}. These papers define a geometric subclass of integrally closed ideals, called $p_g$-ideals, and show this class behaves rather similarly to integrally closed ideals on a rational singularity. Rational singularities are characterized by the property that every integrally closed ideal is $p_g$. 

In a different development, the Lipman--Teissier 
characterization was strengthened by Cutkosky,
providing a very satisfactory converse to Theorem~\ref{Lipman}.

\begin{theorem}{\cite[Theorem 1]{CutRat}} \label{thm Cutkosky}
Let $(R, \m)$ be an analytically normal local domain of dimension $2$ such that 
$R/\m$ is an algebraically closed field. The following are equivalent:
\begin{enumerate}
\item $R$ has rational singularity;
\item For any integrally closed $\m$-primary ideal $I$, one has $\overline{I^2} = I^2$;
\item For any two integrally closed ideals $I,J$, one has $\overline{IJ}=IJ$.
\end{enumerate}
\end{theorem}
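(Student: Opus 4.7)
The plan is to prove the cycle $(1) \Rightarrow (3) \Rightarrow (2) \Rightarrow (1)$. The implication $(1) \Rightarrow (3)$ is exactly Lipman's Theorem~\ref{Lipman}, while $(3) \Rightarrow (2)$ is immediate by setting $J = I$. All of the content therefore lies in $(2) \Rightarrow (1)$.

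For $(2) \Rightarrow (1)$, the natural approach is to verify the Lipman--Teissier criterion: since $\dim R = 2$ and resolutions exist by Lipman, pseudo-rationality and rationality coincide, and pseudo-rationality is equivalent to $\overline{I^2} = I\overline{I}$ for every $\m$-primary ideal $I$. As a first, purely formal reduction, the always-true identity $\overline{I^2} = \overline{\overline{I}^2}$ combined with hypothesis (2) applied to the integrally closed ideal $\overline{I}$ yields $\overline{I^2} = \overline{I}^2$. So it suffices to establish $\overline{I}^2 = I\overline{I}$ for every $\m$-primary ideal $I$.

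To deduce this, I would pass to a minimal reduction $J \subseteq I$. The hypothesis that $R/\m$ is algebraically closed guarantees that $J$ can be taken to be a $2$-generated parameter ideal with $\overline{J} = \overline{I}$. If one can establish $\overline{J}^2 = J\overline{J}$ for all such parameter ideals $J$, then the chain $\overline{I}^2 = \overline{J}^2 = J\overline{J} \subseteq I\overline{I} \subseteq \overline{I}^2$ closes, and $(2) \Rightarrow (1)$ follows.

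The main obstacle is this Brian\c con--Skoda-type identity for parameter ideals, which is where the geometric content of hypothesis (2) must be unlocked. A viable approach is to pass to a log-resolution $\pi: X \to \Spec R$ and exploit the correspondence between $\m$-primary integrally closed ideals and antinef cycles $D$ on $X$: writing $\overline{I} = H^0(X, \mathcal{O}_X(-D))$, the defect of the equality $\overline{I}^2 = J\overline{I}$ is governed by an $H^1$ on $X$ associated with $D$. Forcing this cohomology to vanish for \emph{every} antinef cycle, by applying (2) to the full range of integrally closed ideals attached to variable $D$, should yield $R^1 \pi_* \mathcal{O}_X = 0$, i.e.\ rationality via Artin's criterion. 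The delicate point is running this argument without circularly invoking Itoh's pseudo-rational Brian\c con--Skoda identity for parameters; the $H^1$ vanishings must be extracted one antinef cycle at a time, inducting along cycles on the exceptional divisor.
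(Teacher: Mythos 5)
This statement is quoted in the paper from Cutkosky's article \cite{CutRat}; the paper itself contains no proof of it, so there is nothing internal to compare against and your attempt has to stand on its own. The easy parts of your plan are fine: $(1)\Rightarrow(3)$ is exactly Lipman's Theorem~\ref{Lipman}, $(3)\Rightarrow(2)$ is trivial, and your formal reductions of $(2)\Rightarrow(1)$ are sound. Indeed, $\overline{I^2}=\overline{\overline{I}^2}=\overline{I}^2$ by (2), a $2$-generated minimal reduction $J\subseteq I$ with $\overline{J}=\overline{I}$ exists since the residue field is infinite, and the Lipman--Teissier criterion (together with Lipman's resolution theorem identifying pseudo-rational and rational in dimension two) reduces everything to the identity $\overline{J^2}=J\overline{J}$ for parameter ideals $J$. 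Note also that the circularity you worry about with respect to Itoh is not the issue: the Huneke--Itoh intersection theorem $\overline{J^2}\cap J=J\overline{J}$ holds unconditionally for parameter ideals in an analytically unramified ring, so what remains is precisely the Brian\c con--Skoda containment $\overline{J^2}\subseteq J$ --- the same formulation that appears in Question~\ref{Q2} of the paper.

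The genuine gap is the last step, which is the entire content of Cutkosky's theorem. By Lipman--Teissier, the containment $\overline{\q^2}\subseteq\q$ for all parameter ideals is \emph{equivalent} to pseudo-rationality, and by Lipman's theory the vanishing of $h^1(X,\mathcal{O}_X(-D))$ for antinef cycles $D$ (equivalently $R^1\pi_*\mathcal{O}_X=0$) is essentially a restatement of rationality; so when you write that applying (2) to the ideals attached to variable antinef cycles ``should yield'' these vanishings, you are not giving an argument but asserting the conclusion in an equivalent form. The missing mechanism is exactly what Cutkosky supplies: a quantitative comparison, via Riemann--Roch and length/multiplicity computations on a resolution, showing that if $\overline{I^2}=I^2$ for every complete $\m$-primary $I$ then the cohomological correction terms in $\ell(R/\overline{I^n})$ are forced to vanish, whence rationality. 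Without some such input (or another genuine source of the containment $\overline{J^2}\subseteq J$), your proposal proves $(1)\Rightarrow(3)\Rightarrow(2)$ and correctly reduces $(2)\Rightarrow(1)$ to an equivalent open step, but does not prove it.
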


The motivation of this paper was to search for a characteristic $p > 0$ version of this result, i.e., a theorem that would replace integral closure with tight closure in order to describe the class of F-rational singularities. This approach was also inspired by the tight closure approach to the Brian{\c c}on--Skoda theorem \cite{HoHu1,AberbachHuneke}.

Rational singularities in positive characteristic are not as well-behaved as in characteristic zero. Already in dimension two, there are surprising phenomena such as the failure of tautness (i.e., several non-isomorphic singularities with the same resolution graph) of rational double points in small characteristics \cite{Artin}. Moreover, since the resolution of singularities is generally not known, the usual definition of rational singularities is not available and it is not clear what should the right definition be. 

For many purposes, F-rational singularities can be regarded as the ``correct'' characteristic $p>0$ analogue of rational singularities in characteristic $0$.  It is known that F-rational singularities are normal, Cohen-Macaulay and pseudo-rational \cite{SmithFRational}. 
Our main result is the following.

\begin{theoremx}[see Theorem \ref{ThmRationalParIdeals}] \label{THMX}
Let $(R,\m)$ be an excellent F-injective normal domain. 
Then $R$ is F-rational if and only if, for every ideals $\q_1 \subseteq \q_2$ generated by systems of parameters, one has $(\q_1\q_2)^* = \q_1^*\q_2^*$.
\end{theoremx}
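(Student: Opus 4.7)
The forward direction is the easier one: if $R$ is F-rational, then $R$ is Cohen--Macaulay and every parameter ideal is tightly closed, so $\q_i^* = \q_i$ and the identity to prove reduces to showing $(\q_1 \q_2)^* = \q_1 \q_2$, i.e., that the product of two nested parameter ideals is itself tightly closed. My plan is to exploit the Cohen--Macaulay structure via Koszul cycles on a regular sequence generating $\q_2$ together with the vanishing $0^*_{H^d_\m(R)} = 0$ afforded by F-rationality, in order to pin down $(\q_1 \q_2)^*$. An alternative route is an Aberbach--Huneke style tight closure Brian\c{c}on--Skoda analysis: in an F-rational ring the tight closure of products like $\q_1 \q_2$ can be controlled by the generator count and reductions modulo powers of $\q_2$.

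The reverse direction carries the real weight. Assume $R$ is excellent, F-injective, normal, and that $(\q_1 \q_2)^* = \q_1^* \q_2^*$ for every nested pair of parameter ideals. We must establish (a) that $R$ is Cohen--Macaulay and (b) that every parameter ideal is tightly closed. First I would reduce to the complete case: since $R$ is excellent, tight closure, F-injectivity, normality, and the product condition all descend to $\widehat R$. The main step (a) is to leverage the product hypothesis against F-injectivity. Concretely, if $R$ fails to be Cohen--Macaulay, then colon-capturing produces nontrivial elements in $\q^*\setminus\q$ for some parameter ideal $\q$; testing the product identity on a family such as $\q_1 = (x_1^{n+1}, x_2, \ldots, x_d) \subseteq \q_2 = (x_1, \ldots, x_d)$ as $n$ varies should clash with F-injectivity, which forces the Frobenius closure of every parameter ideal to equal itself. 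Step (b) then falls out: with $R$ now Cohen--Macaulay, specialize the hypothesis to $\q_1 = \q_2 = \q$ to read $(\q^2)^* = (\q^*)^2$; combined with colon-capturing and F-injectivity this should squeeze $\q^* = \q$, which is F-rationality.

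The main obstacle is step (a). Without Cohen--Macaulayness we do not know that $\q^*$ is again a parameter ideal, so the product identity is genuinely constraining the interplay between tight closure and the minimal number of generators. The key technical difficulty is to use F-injectivity to separate the ``Frobenius part'' of tight closure (which is trivial for parameter ideals in F-injective rings once CM is established) from a residual ``multiplier'' part, and to choose the test ideals $\q_1 \subseteq \q_2$ so cleverly that the product hypothesis forces this residual part to vanish. I expect this is where most of the technical content of the paper lies, and where the analogue of Cutkosky's argument for integral closure has to be genuinely reworked for tight closure.
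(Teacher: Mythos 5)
Your reverse direction has two genuine gaps. First, the claim you lean on in step (a) --- that F-injectivity ``forces the Frobenius closure of every parameter ideal to equal itself'' --- is false in general: the implication that holds unconditionally goes the other way (Frobenius-closed parameter ideals imply F-injectivity), and the converse requires $\HH^i_\m(R)$ to have finite length for $i \neq \dim R$, i.e.\ essentially Cohen--Macaulayness off the maximal ideal, which is exactly what you have not yet established. This is why the paper never tries to prove Cohen--Macaulayness directly from the product hypothesis. Instead it localizes at a minimal prime $\p$ of the (open) non-F-rational locus, so that $R_\p$ is F-rational on its punctured spectrum; only then do the statements $\q^F=\q$ and $\m\q^*\subseteq\q$ become available (Corollary~\ref{cor: tight closure is vector space}, via Ma's theorem and the parameter test ideal). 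Making this reduction work requires a nontrivial ingredient you have not accounted for: the product hypothesis must be shown to pass to $R_\p$, which is done by proving that tight closure commutes with localization for ideals generated by monomials in parameters (Proposition~\ref{localization}, resting on Aberbach--Hochster--Huneke). Passing to $\widehat R$, as you propose, does not substitute for this.

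Second, and more seriously, your step (b) has no mechanism: from $(\q^2)^*=(\q^*)^2$ alone, ``colon-capturing and F-injectivity'' are not known to squeeze out $\q^*=\q$ --- whether the square condition characterizes F-rationality is precisely what the paper leaves open (Question~\ref{Q1}). The proof actually uses the hypothesis at the nested pairs $\q^{[q]} \subseteq \q$ for \emph{all} Frobenius powers $q$, combined with Epstein's special part of tight closure: one has $(\q^*)^{[q_0]} \subseteq \q^{[q_0]} + (\q^{[q_0]})^{*sp}$ for some $q_0$, and the identities $(\q\,\q^{[q]})^* = \q^*(\q^{[q]})^*$ together with $\m\q^*\subseteq\q$ and $\q^F=\q$ force $(\q^{[q_0]})^{*sp} \subseteq \q^{[q_0]}$, hence $\q^*=\q$ (Theorem~\ref{THMFRationalforFInjective}). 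This use of Frobenius-power products and special tight closure is the technical heart of the argument and is absent from your outline. (Your forward direction is fine in spirit, though heavier than needed: since an F-rational excellent ring is Cohen--Macaulay with all parameter ideals tightly closed, a direct regular-sequence colon argument on a tight closure relation $c\alpha^q \in (\q_1\q_2)^{[q]}$ already yields $(\q_1\q_2)^* = \q_1\q_2$, with no Koszul cohomology or Brian\c{c}on--Skoda machinery.)
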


The statement of Theorem \ref{ThmRationalParIdeals} is actually more general: we do not assume that $R$ is F-injective to start with, and in place of that we add a condition in terms of Frobenius powers of parameter ideals.

After Theorem \ref{THMX}, and keeping in mind the analogies with the statements for rational singularities, a natural question is whether F-rationality can equivalent to every product of tightly closed ideals being tightly closed, at least in dimension two. However, after a personal communication with Craig Huneke, we were informed that this is not the case. Indeed, Brenner, Huneke, Mukundan and Verma were able to show that if $(R,\m)$ is F-rational and $(IJ)^* = I^*J^*$ for all $\m$-primary ideals $I,J$, then $R$ is weakly F-regular (see Theorem \ref{BHMV}). 

\subsection*{Acknowledgements} We thank Craig Huneke, Pham Hung Quy and Florian Enescu for helpful discussions, and Burt Totaro for providing feedback on an earlier version of this preprint. We also thank the referee for helping us improve the quality of the exposition. We are grateful to Holger Brenner, Craig Huneke, Vivek Mukundan and Jugal Verma for allowing us to include Theorem \ref{BHMV} in this paper.
The first named author was partially supported by the MIUR Excellence Department Project CUP D33C23001110001, and by INdAM-GNSAGA.
The second author was supported by Spanish Ministry of Science, Innovation, and  Universities under grants PCI2024-155055-2 funded by MICIU/AEI/10.13039/501100011033 and UE,
RYC2020-028976-I funded by MICIU/AEI/10.13039/501100011033 and by FSE ``invest in your future'', and
EUR2023-143443 funded by MICIU/AEI/10.13039/501100011033 and the European Union NextGenerationEU/PRTR.

\section{Background}

Throughout this article, $(R,\m)$ denotes a local ring of prime characteristic $p > 0$. 
 Given a proper ideal $I \subseteq R$, and $q=p^e$, we let  $I^{[q]} = (y^q \mid y \in I)$ be its Frobenius power. Note that $I^{[q]} = F^e(I)R$, where $F\colon R \to R$ is the Frobenius map, that is, the ring homomorphism raising an element to its $p$-th power. We will use $\ell_R(-)$ to denote the length of an $R$-module.

\begin{definition}
An element $x \in R$ belongs to the Frobenius closure $I^F$ of $I$ if $x^q \in I^{[q]}$ for all $q=p^e \gg 0$.
\end{definition}

 We denote by $R^\circ$ the complement of the minimal primes of $R$.
 
 \begin{definition}
 An element $x \in R$ belongs to the tight closure $I^*$ of $I$ if there exists $c \in R^\circ$ such that $cx^q \in I^{[q]}$ for all $q=p^e \gg 0$.  
 \end{definition}
Clearly, we have $I \subseteq I^F \subseteq I^*$. Moreover, $I^* \subseteq \overline{I} \subseteq \sqrt{I}$.
\begin{definition} Let $(R,\m)$ be a local ring.
\begin{itemize}
\item A parameter is an element $x \in \m$ such that $\dim(R/(x)) = \dim(R)-1$. A system of parameters is a sequence $x_1,\ldots,x_t$ such that $x_{i+1}$ is a parameter in $R/(x_1,\ldots,x_i)$ for all $i=0,\ldots,t-1$. 
\item A parameter ideal $\q$ is an ideal generated by a system of parameters $x_1,\ldots,x_t$ for some $0 \leq t \leq d$, where the case $t=0$ corresponds to the choice $\q=(0)$.
\item A filter regular element is $x \in \m$ such that $0:_R x$ has finite length. A filter regular sequence are elements $x_1,\ldots,x_t$ such that $x_{i+1}$ is a filter regular element in $R/(x_1,\ldots,x_i)$ for all $i=0,\ldots,t-1$.
\end{itemize} 
\end{definition}

\begin{remark} \label{rmk filter regular} An element $x \in \m$ is filter regular if and only if $x \notin \p$ for all $\p \in \Ass(R) \smallsetminus \{\m\}$. In particular, a filter regular element is a parameter as long as $d>0$. In particular, by prime avoidance, we can always find a full system of parameters $x_1,\ldots,x_d$ which is a filter regular sequence.
\end{remark}

\begin{definition}
We say that $(R,\m)$ is F-rational if $\q = \q^*$ for every parameter ideal $\q \subseteq R$. 
\end{definition}

Recall that an F-rational local ring $R$ is always normal, and it is Cohen-Macaulay if it is the homomorphic image of a Cohen-Macaulay ring \cite[Theorem (4.2)]{HoHu2}. This happens, for instance, if $R$ is excellent (see \cite{Kawasaki}). It was proved by Smith that a $d$-dimensional Cohen-Macaulay local ring is F-rational if and only if its top local cohomology module $H^d_\m(R)$ has no proper submodules which are stable under the action of the induced Frobenius map $F:H^d_\m(R) \to H^d_\m(R)$ \cite{SmithFRational}.

\begin{definition} The {\it parameter test ideal of $R$} is 
\[
\ds \tpar(R) = \bigcap_\q (\q: \q^*),
\]
where $\q$ runs over all parameter ideals (equivalently, all ideals generated by a full system of parameters).
\end{definition}

Observe that $R$ is F-rational if and only if $\tpar(R) = R$. Moreover, if $R$ is equidimensional, then it is F-rational on the punctured spectrum if and only if $\tpar(R)$ is an $\m$-primary ideal.

\begin{definition}
We say that $(R,\m)$ is F-injective if the map induced on $\HH^i_\m(R)$ by the Frobenius map on $R$ is injective for all $i \in \NN$.
\end{definition}

It is well-known that F-rational rings are F-injective. F-injectivity is related to the Frobenius closure of parameter ideals: if every parameter ideal is Frobenius closed then $R$ is F-injective \cite[Main Theorem A]{PhamShimomoto}. The converse does not hold in general, see \cite[Main Theorem B]{PhamShimomoto}, but it does if $\HH^i_\m(R)$ has finite length for each $i \ne d$, as shown in \cite[Theorem 1.1]{Ma}. Note that the condition that $\HH^i_\m(R)$ has finite length for each $i \ne d$ is equivalent to $R$ being Cohen-Macaulay on the punctured spectrum when $R$ is equidimensional.

\begin{lemma} \label{LemmaTightFrobenius} Let $(R,\m)$ be a local ring, and assume that $\tpar(R)$ contains an ideal $I$. For every parameter ideal $\q$ of $R$, $\q^*/\q^F$ is annihilated by $\sqrt{I}$. In particular, if $R$ is equidimensional and F-rational on the punctured spectrum, then $\q^*/\q^F$ is an $R/\m$-vector space.
\end{lemma}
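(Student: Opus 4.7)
The plan is to show directly that $\sqrt{I} \cdot \q^* \subseteq \q^F$; the conclusion about the vector space structure then follows because, under the assumption that $R$ is equidimensional and F-rational on the punctured spectrum, $\tpar(R)$ is $\m$-primary, so one can take $I = \tpar(R)$ and obtain $\sqrt{I} = \m$.

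First I would fix an element $c \in \sqrt{I}$, so that $c^n \in I$ for some $n \geq 1$, and $x \in \q^*$. The goal becomes to verify that $(cx)^q \in \q^{[q]}$ for all $q = p^e \gg 0$. The key input is the standard observation that if $x \in \q^*$, witnessed by some $d \in R^\circ$ satisfying $d x^{q'} \in \q^{[q']}$ for $q' \gg 0$, then applying this with $q'$ a sufficiently large multiple of $q$ gives $d (x^q)^{q'/q} \in (\q^{[q]})^{[q'/q]}$, which shows that $x^q \in (\q^{[q]})^*$.

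Next I would exploit the defining property of $\tpar(R)$. Because $\q^{[q]}$ is itself a parameter ideal (it is generated by $q$-th powers of a system of parameters, which again form a system of parameters), the assumption $I \subseteq \tpar(R) \subseteq (\q^{[q]} : (\q^{[q]})^*)$ yields
\[
I \cdot x^q \subseteq I \cdot (\q^{[q]})^* \subseteq \q^{[q]}.
\]
In particular $c^n x^q \in \q^{[q]}$, and therefore for every $q \geq n$ we can write
\[
(cx)^q = c^q x^q = c^{q-n}\bigl(c^n x^q\bigr) \in \q^{[q]}.
\]
This proves $cx \in \q^F$ and so $\sqrt{I}$ annihilates $\q^*/\q^F$.

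For the final statement, recall from the discussion following the definition of $\tpar(R)$ that when $R$ is equidimensional and F-rational on the punctured spectrum, $\tpar(R)$ is $\m$-primary. Applying the first part with $I = \tpar(R)$ gives $\sqrt{I} = \m$, so $\m$ annihilates $\q^*/\q^F$, making it an $R/\m$-vector space. The main (very minor) technical point is the reduction that $x \in \q^*$ implies $x^q \in (\q^{[q]})^*$; everything else is a direct unwinding of the definitions of $\tpar(R)$ and of Frobenius closure.
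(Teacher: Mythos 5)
Your proof is correct and follows essentially the same route as the paper: both arguments rest on the facts that $\q^{[q]}$ is again a parameter ideal, that $(\q^*)^{[q]} \subseteq (\q^{[q]})^*$, and that $\tpar(R)\,(\q^{[q]})^* \subseteq \q^{[q]}$, with the only cosmetic difference that you argue elementwise (absorbing $c^n \in I$ into $c^q$ for $q \geq n$) while the paper works with the ideal $J=\sqrt{I}$ at a single fixed Frobenius power $q$ with $J^{[q]} \subseteq \tpar(R)$. The deduction of the vector-space statement from $\m$-primariness of $\tpar(R)$ also matches the paper.
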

\begin{proof}
The proof of the first claim is analogous to \cite[Proposition 8.13]{HoHu3}: we let $J = \sqrt{I}$ and 
may assume that $J^{[q]} \subseteq \tpar(R)$ for some $q=p^{e}$. Then 
\[
(J\q^*)^{[q]} \subseteq J^{[q]} (\q^{[q]})^* \subseteq \tpar(R) (\q^{[q]})^* \subseteq \q^{[q]}.
\]
It follows that $J\q^* \subseteq \q^F$. The second claim follows immediately from the first and from previous observations.
\end{proof}

\begin{corollary}\label{cor: tight closure is vector space}
Let $(R,\m)$ be an F-injective equidimensional local ring which is F-rational on the punctured spectrum.
Then any parameter ideal $\q$ is Frobenius closed and  
$\m \q^* \subseteq \q$. 
\end{corollary}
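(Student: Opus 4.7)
My plan is to split the statement into two assertions, (i) $\q^F = \q$ for every parameter ideal $\q$, and (ii) $\m\q^* \subseteq \q^F$, and prove each using one of the two ingredients mentioned just above Lemma \ref{LemmaTightFrobenius}. Assertion (ii) is a quick consequence of Lemma \ref{LemmaTightFrobenius}, while (i) is where the F-injectivity hypothesis gets used, via Ma's theorem.

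For (i), I would first verify the hypotheses of \cite[Theorem 1.1]{Ma}, namely that $\HH^i_\m(R)$ has finite length for every $i \ne d$. Since $R$ is equidimensional, this is equivalent to $R$ being Cohen-Macaulay on the punctured spectrum, as recalled in the discussion preceding Lemma \ref{LemmaTightFrobenius}. The latter is in turn a consequence of our hypothesis that $R$ is F-rational on the punctured spectrum: for each prime $\p \ne \m$ the localization $R_\p$ is F-rational, hence Cohen-Macaulay (via colon capturing applied to systems of parameters in $R_\p$). With this finite-length condition in hand, Ma's theorem combined with F-injectivity of $R$ yields $\q^F = \q$ for every parameter ideal $\q$.

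For (ii), I would use that the discussion following the definition of the parameter test ideal says that, since $R$ is equidimensional and F-rational on the punctured spectrum, $\tpar(R)$ is $\m$-primary. Hence $\tpar(R)$ contains $\m^N$ for some $N \ge 1$, and $\sqrt{\m^N} = \m$. Applying Lemma \ref{LemmaTightFrobenius} with $I = \m^N$ gives that $\q^*/\q^F$ is annihilated by $\m$, i.e.\ $\m \q^* \subseteq \q^F$. Combining with (i), we conclude $\m\q^* \subseteq \q$, as desired.

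The only delicate step in this plan is the passage ``F-rational on the punctured spectrum $\Rightarrow$ Cohen-Macaulay on the punctured spectrum'', since F-rationality does not automatically imply Cohen-Macaulayness without some further structural hypothesis; under our setup, however, it is standard. Everything else is direct assembly of Lemma \ref{LemmaTightFrobenius} with Ma's theorem and the $\m$-primary bound on $\tpar(R)$.
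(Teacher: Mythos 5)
Your proposal is correct and follows essentially the same route as the paper: Ma's theorem (after noting that equidimensionality plus F-rationality on the punctured spectrum gives finite length of $\HH^i_\m(R)$ for $i \ne \dim R$) yields $\q^F = \q$, and Lemma~\ref{LemmaTightFrobenius} with the $\m$-primary parameter test ideal gives $\m\q^* \subseteq \q^F$. Your step (ii) just re-derives the second claim of Lemma~\ref{LemmaTightFrobenius} from its first claim, which is exactly what the lemma already records, so there is no substantive difference.
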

\begin{proof}
Because $R$ is F-rational on the punctured spectrum and equidimensional, it follows that $\HH^i_\m(R)$ has finite length for all $i \ne \dim(R)$. Thus $\q^F = \q$  by \cite[Theorem~1.1]{Ma} and the assertion follows from Lemma~\ref{LemmaTightFrobenius}.
\end{proof}

\section{Tight closure of products of parameter ideals}

We start with a condition on Frobenius powers of parameter ideals which guarantees that a ring is F-rational.

\begin{proposition} \label{PropFInjectiveFrobenius}
Let $(R,\m)$ be a local ring. If every parameter ideal $\q$ satisfies $(\q^F)^{[p]} = (\q^{[p]})^F$, then $R$ is F-injective. 
\end{proposition}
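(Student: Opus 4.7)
My plan is to apply \cite[Main Theorem~A]{PhamShimomoto}, reducing F-injectivity to showing that every parameter ideal of $R$ is Frobenius closed. The first thing I would note is that applying the hypothesis to $\q=(0)$ yields $(\Nil(R))^{[p]} = \Nil(R)$; since $(\Nil(R))^{[p]} \subseteq \m\cdot\Nil(R)$, Nakayama's lemma then forces $\Nil(R)=0$, so $R$ is reduced.

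The key technical step is a bootstrapping identity: by induction on $e$, applying the hypothesis to the parameter ideal $\q^{[p^{e-1}]}$ (still a parameter ideal), one gets
\[
(\q^F)^{[p^e]} \;=\; (\q^{[p^e]})^F
\qquad \text{for every }e\ge 1\text{ and every parameter ideal }\q.
\]
A first formal consequence is that the Frobenius $R/\q^F \to R/(\q^F)^{[p]}$ is injective: its kernel consists of $\bar r$ with $r^p \in (\q^F)^{[p]} = (\q^{[p]})^F$, i.e.\ $r^{p^{k+1}} \in \q^{[p^{k+1}]}$ for some $k$, i.e.\ $r \in \q^F$.

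With these pieces in hand, to prove $\q^F = \q$ I take $x \in \q^F$ with minimal exponent $e$ such that $x^{p^e}\in \q^{[p^e]}$ and, after replacing $\q$ by $\q^{[p^{e-1}]}$ and $x$ by $x^{p^{e-1}}$, reduce to the case $e=1$, where $x^p \in \q^{[p]}$ but $x \notin \q$. In this case, I would combine the hypothesis with the reducedness of $R$ to derive a contradiction.

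The main obstacle I anticipate is precisely this final step: the identity $(\q^F)^{[p]} = (\q^{[p]})^F$ is an ideal-theoretic equality, and transferring it to an element-level conclusion about $x$ is delicate, because $x^p \in \q^{[p]} \subseteq (\q^F)^{[p]}$ by itself only places $x$ back in $\q^F$. I expect the argument will invoke either a Noetherian termination on an ascending chain of Frobenius-closure witnesses, or a direct use of the injective Frobenius maps $R/\q^F \hookrightarrow R/(\q^{[p]})^F$ passed through the direct limit computing $H^i_\m(R)$, thereby bypassing Pham--Shimomoto and establishing F-injectivity directly.
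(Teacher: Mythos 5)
Your preliminary steps match the paper exactly: reducedness via $\q=(0)$ and Nakayama, and the bootstrapped identity $(\q^F)^{[p^e]}=(\q^{[p^e]})^F$ for all $e$. But the route you then choose --- reducing F-injectivity to ``every parameter ideal is Frobenius closed'' via \cite[Main Theorem A]{PhamShimomoto} --- has a genuine gap precisely at the step you flag. Your minimal-exponent reduction lands you in the case $x^p\in\q^{[p]}$, $x\notin\q$, and the hypothesis gives you no traction there: as you note yourself, $x^p\in\q^{[p]}\subseteq(\q^F)^{[p]}$ only returns $x\in\q^F$, which is circular. The hypothesis controls the relation between $\q^F$ and $(\q^{[p]})^F$, not the gap between $\q$ and $\q^F$, so there is no apparent way to close this loop; ``every parameter ideal is Frobenius closed'' is in general strictly stronger than F-injectivity (cf.\ \cite[Main Theorem B]{PhamShimomoto}), and nothing in your argument shows it follows from the stated hypothesis. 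Your fallback suggestion of pushing the injections $R/\q^F\hookrightarrow R/(\q^{[p]})^F$ through a direct limit also does not work as stated for $i<\dim R$: the modules $H^i_\m(R)$ are not colimits of the quotients $R/\q^{[q]}$ (let alone of quotients by Frobenius closures), and making such a comparison precise is exactly the filter-regular-sequence machinery you would be trying to bypass.

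The missing idea, and the paper's actual route, is to aim lower: do not try to show $\q$ itself is Frobenius closed, only that its \emph{high Frobenius powers} are. Since $\q^F$ is finitely generated, $(\q^F)^{[q]}=\q^{[q]}$ for all $q\gg 0$, and combined with your bootstrap identity this yields $(\q^{[q]})^F=(\q^F)^{[q]}=\q^{[q]}$ for all $q\ge q_0(\q)$. Now take $\q=(x_1,\ldots,x_t)$ generated by a filter regular sequence ($1\le t\le d$, Remark~\ref{rmk filter regular}) and replace it by $\a=\q^{[q_0]}$, which is again generated by a filter regular sequence and has \emph{all} Frobenius powers $\a^{[q]}$ Frobenius closed. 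Then \cite[Theorem~3.7]{PhamShimomoto} (not Main Theorem A) gives injectivity of Frobenius on $H^t_\m(R)$ for each such $t$, and $H^0_\m(R)=0$ by reducedness, which is F-injectivity. So your proposal is salvageable, but only by abandoning its central reduction and substituting this weaker, attainable intermediate statement together with the corresponding result of Pham--Shimomoto.
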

\begin{proof}
First, note that choosing $\q=(0)$ we get that $((0)^F)^{[p]} = (0)^F$, and thus $(0)^F=(0)$ by Nakayama's Lemma. It follows that $R$ is reduced. Let $d=\dim(R)$, and $x_1,\ldots,x_d$ be a filter regular sequence, see Remark \ref{rmk filter regular}. For $1 \leq t \leq d$ set $\q=(x_1,\ldots,x_t)$, and observe that $(\q^F)^{[q]}=\q^{[q]}$ for all $q =p^e \gg 0$. Since $\q^{[q/p]}$ is also a parameter ideal, we have 
\[
\ds (\q^{[q]})^F = ((\q^{[q/p]})^{[p]})^F = ((\q^{[q/p]})^F)^{[p]} = (\q^F)^{[q]}.
\]
Thus there exists $q_0 = q_0(\q)$ such that for $q \geq q_0$ we have $(\q^{[q]})^F = \q^{[q]}$. In particular, replacing $\q$ with $\a:=\q^{[q_0]}$, we have that $\a$ is generated by a filter regular sequence, and $\a^{[q]}$ is Frobenius closed for all $q$. It follows from \cite[Theorem 3.7]{PhamShimomoto} that the Frobenius action on $\HH^t_\m(R)$ is F-injective. Since this holds for all $1 \leq t \leq d$, and $\HH^0_\m(R)=0$ because $R$ is reduced, we conclude that $R$ is F-injective.
\end{proof}

We recall the definition of special tight closure, see \cite{Vraciu,HunekeVraciu,Epstein} for more details on this notion.

\begin{definition} Let $(R,\m)$ be a local ring, and $I \subset R$ be an ideal. The special part of the tight closure of $I$ is defined as follows:
\[
I^{*sp} = \{x \in R \mid  x^{q_1} \in (\mf m I^{[q_1]})^* \text{ for some } q_1=p^{e_1}\}.
\]
\end{definition}

\begin{theorem}\label{THMFRationalforFInjective}
Let $(R, \m)$ be an $F$-injective excellent normal domain which is $F$-rational on the punctured spectrum. Then $R$ is $F$-rational if and only if for some (equivalently, for all) $\m$-primary parameter ideal $\q$, we have $(\q \q^{[q]})^* = \q^* (\q^{[q]})^*$ for all $q=p^e$.
\end{theorem}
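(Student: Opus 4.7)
The plan is to exploit Corollary~\ref{cor: tight closure is vector space}, which gives $\m(\q')^* \subseteq \q'$ for every parameter ideal $\q'$, together with the standard Cohen-Macaulay colon identities for Frobenius powers of parameter ideals.

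For the forward direction, if $R$ is F-rational then $R$ is Cohen-Macaulay and one has $\q^* = \q$, $(\q^{[q]})^* = \q^{[q]}$. The remaining equality $(\q\q^{[q]})^* = \q\q^{[q]}$ can be obtained via the Cohen-Macaulay isomorphism $\q^{[q]}/\q\q^{[q]} \cong (R/\q)^d$ (induced by the generators $x_i^q$): any element of $(\q\q^{[q]})^*$ already lies in $(\q^{[q]})^* = \q^{[q]}$ and may be written as $\sum a_i x_i^q$, and the tight-closure condition, expanded via $(\sum a_i x_i^q)^{q'} = \sum a_i^{q'} x_i^{qq'}$ in characteristic $p$ together with the Cohen-Macaulay colon identities in $\q^{[q']}\q^{[qq']}$, forces each $a_i \in \q^* = \q$, hence the element lies in $\q\q^{[q]}$.

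For the reverse direction, which I expect to be the heart of the argument, assume $(\q\q^{[q]})^* = \q^*(\q^{[q]})^*$ for every $q = p^e$. Since $\q^* \subseteq \m$ (as $\q$ is proper) and $\m(\q^{[q]})^* \subseteq \q^{[q]}$ by Corollary~\ref{cor: tight closure is vector space} applied to the parameter ideal $\q^{[q]}$, the hypothesis gives
\[
(\q\q^{[q]})^* = \q^*(\q^{[q]})^* \subseteq \m(\q^{[q]})^* \subseteq \q^{[q]}.
\]
First I would show $\q^* = \q$: for $y \in \q^*$ and each generator $x_i$ of $\q$, the product $yx_i^q \in \q^*(\q^{[q]})^* \subseteq \q^{[q]}$, and the Cohen-Macaulay colon $(\q^{[q]} : x_i^q) = \q^{[q]}$ forces $y \in \q^{[q]}$ for every $q$, hence $y \in \q$. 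Next I would show $(\q^{[q]})^* = \q^{[q]}$ for every $q$: for $y \in (\q^{[q]})^*$, using $\q^* = \q$, $\q y \subseteq \q^*(\q^{[q]})^* \subseteq \q^{[q]}$, so $y$ lies in the Koszul colon $(\q^{[q]} : \q) = \q^{[q]} + (x_1 \cdots x_d)^{q-1}R$. Writing $y = y_0 + r(x_1 \cdots x_d)^{q-1}$ with $y_0 \in \q^{[q]}$, the membership $r(x_1 \cdots x_d)^{q-1} \in (\q^{[q]})^*$ combined with the Cohen-Macaulay identity $(\q^{[qq']} : (x_1 \cdots x_d)^{(q-1)q'}) = \q^{[q']}$ yields $r \in \q^* = \q$, and then $r(x_1 \cdots x_d)^{q-1} \in \q \cdot (x_1 \cdots x_d)^{q-1} \subseteq \q^{[q]}$, so $y \in \q^{[q]}$. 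Since the family $\{\q^{[q]}\}_q$ is cofinal in the colimit $\HH^d_\m(R) = \varinjlim R/\q^{[q]}$, the tight closure of zero in $\HH^d_\m(R)$ vanishes, and $R$ is F-rational.

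The main obstacle is the reliance on Cohen-Macaulayness in the colon identities, which is not among the hypotheses -- only Cohen-Macaulayness on the punctured spectrum is given. The relevant discrepancies are finite-length modules supported at $\m$, and I would expect them to be absorbable by passing to sufficiently deep Frobenius powers $q$; alternatively, one might first deduce full Cohen-Macaulayness of $R$ by combining F-injectivity with the finite length of $\HH^i_\m(R)$ for $i < d$.
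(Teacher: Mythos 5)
Your forward direction is fine: since an excellent F-rational ring is Cohen--Macaulay, writing an element of $(\q\q^{[q]})^* \subseteq (\q^{[q]})^* = \q^{[q]}$ as $\sum a_i x_i^q$ and using the regular sequence $x_1^{qq'},\ldots,x_d^{qq'}$ to extract $a_i \in \q^* = \q$ is a correct alternative to the paper's argument (which instead writes $\q\q^{[q]}$ as an Eagon--Hochster intersection of monomial parameter ideals, each tightly closed). The reverse direction, however, has a fatal gap, and it is exactly at the step you yourself flag as the heart of the matter. In your Step 1 the colon identity $(\q^{[q]}:x_i^q)=\q^{[q]}$ is false: since $x_i^q$ is a generator of $\q^{[q]}$, that colon is all of $R$, so the containment $yx_i^q \in \q^{[q]}$ carries no information whatsoever (it holds for every $y \in R$). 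Worse, the conclusion you try to draw, ``$y \in \q^{[q]}$ for every $q$,'' would give $y \in \bigcap_q \q^{[q]} = 0$ by Krull intersection, which is absurd for $y \in \q^* \supseteq \q$. Step 2 is the standard argument that one tightly closed full parameter ideal forces F-rationality (and is anyway available as \cite[Theorem~6.27]{HoHu2}), but it is moot without Step 1.

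There is also a structural reason no repair along these lines can succeed: your reverse argument uses only Corollary~\ref{cor: tight closure is vector space} and Koszul/colon manipulations, and never invokes the hypothesis that $R$ is a normal domain. But Example~\ref{ExOneDimensional} ($k\ps{x,y}/(xy)$) is excellent, F-injective, F-rational on the punctured spectrum, and satisfies $(\q\q^{[q]})^* = \q^*(\q^{[q]})^*$ for all parameter ideals and all $q$, yet is not F-rational; so analytic irreducibility is essential and must enter the proof. This is precisely what the paper's argument supplies: it applies Epstein's special tight closure decomposition $(\q^*)^{[q_0]} \subseteq \q^{[q_0]} + (\q^{[q_0]})^{*sp}$ (valid because $R$ is an excellent analytically irreducible domain with the right residue field condition), and then uses the product hypothesis, raising a special element to a power $q_2$ with $\m^{[q_2]} \subseteq \q$, to push it into $\q^*(\q^{[q_0q_1q_2]})^* \subseteq \m(\q^{[q_0q_1q_2]})^* \subseteq \q^{[q_0q_1q_2]}$ and conclude via Frobenius closure that $\q^* = \q$; note this route needs no Cohen--Macaulayness of $R$. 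Finally, your proposed fallback--deducing full Cohen--Macaulayness from F-injectivity plus finite length of $\HH^i_\m(R)$ for $i<d$--does not work either: F-injective rings with isolated non-Cohen--Macaulay points exist, so that combination does not imply $\HH^i_\m(R)=0$ for $i<d$.
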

\begin{proof}
Assume that $R$ is F-rational, and let $\q = (x_1,\ldots,x_d)$ be a parameter ideal, so that $(\q^{[q]})^* = \q^{[q]}$ for all $q=p^e$. The containment $\q^*(\q^{[q]})^* = \q\q^{[q]} \subseteq (\q\q^{[q]})^*$ is clear. For the other containment, observe that $\q\q^{[q]}$ is generated by monomials in $x_1,\ldots,x_d$. By \cite{EagonHochster} we have that $\q\q^{[q]} = J_1 \cap \ldots \cap J_t$, where each $J_i$ is a parameter ideal generated by monomials in the regular sequence $x_1,\ldots,x_d$. Then
\[
(\q\q^{[q]})^* = (J_1 \cap \ldots \cap J_t)^* \subseteq J_1^* \cap \ldots \cap J_t^* = J_1 \cap \ldots \cap J_t = \q\q^{[q]}.
\]
Conversely, let $\q$ be a parameter ideal such that $(\q\q^{[q]})^* = \q^* (\q^{[q]})^*$ for all $q=p^e$. By \cite[Theorem~4.5]{Epstein}, for some $q_0$ we have a containment $(\q^*)^{[q_0]} \subseteq \q^{[q_0]} + (\q^{[q_0]})^{*sp}$. 

We now claim that $(\q^{[q_0]})^{*sp} = \q^{[q_0]}$. Take $x \in (\q^{[q_0]})^{*sp}$, then $x^{q_1} \in (\m \q^{[q_0q_1]})^*$ for some $q_1=p^{e_1}$. 
Let $q_2 \gg 0$ be such that $\m^{[q_2]} \subseteq \q$.
Then we have
\begin{align*}
x^{q_1q_2} & \in ((\m \q^{[q_0q_1]})^*)^{[q_2]} \subseteq (\m^{[q_2]} \q^{[q_0q_1q_2]})^*  \subseteq (\q \q^{[q_0q_1q_2]})^* \\
& = \q^* (\q^{[q_0q_1q_2]})^* \subseteq  \m (\q^{[q_0q_1q_2]})^* \subseteq \q^{[q_0q_1q_2]},
\end{align*}
where the last containment holds by Corollary~\ref{cor: tight closure is vector space}.
This shows that $x \in (\q^{[q_0]})^F = \q^{[q_0]}$ again by Corollary~\ref{cor: tight closure is vector space}. Therefore, $(\q^*)^{[q_0]} \subseteq \q^{[q_0]} + (\q^{[q_0]})^{*sp} = \q^{[q_0]}$, 
so that $\q^* = \q$ by Corollary~\ref{cor: tight closure is vector space}. 
This proves that $R$ is F-rational by \cite[Theorem~6.27]{HoHu2}. 
\end{proof}

\begin{remark}
It should be noted that normality in Theorem \ref{THMFRationalforFInjective} is not essential, \cite[Theorem~4.5]{Epstein} only requires that $R$ is analytically irreducible and that the residue fields of $R$ and its normalization coincide. The normality assumption in Theorem~\ref{THMFRationalforFInjective} is added to ensure both conditions; it should be noted that an F-rational ring is normal by \cite[Theorem~4.2]{HoHu2}.
\end{remark}

The following example shows that analytic irreducibility is crucial in Theorem \ref{THMFRationalforFInjective}.

\begin{example} \label{ExOneDimensional} Let $k$ be an algebraically closed field of characteristic $p > 0$, and $R=k \ps{x,y}/(xy)$. A parameter ideal $\q$ of $R$ can be generated by an element of the form $x^n+uy^m$, where $u \in R$ is a unit. Moreover in this case $\q^* = \overline{\q} = (x^n,y^m)$, so that
\[
(\q\q^{[q]})^* = (x^{n(q+1)}+u^{q+1}y^{m(q+1)})^* = (x^{n(q+1)},y^{m(q+1)}) = (x^n,y^m)(x^{nq},y^{mq})= \q^*(\q^{[q]})^*.
\]
However, $R$ is not F-rational because it is not normal, see \cite[Theorem~4.2]{HoHu2}.
\end{example}

Using an independence theorem of Pham Hung Quy \cite{Quy_parameter}, we show that under suitable assumptions tight closure commutes with Frobenius powers for parameter ideals.

\begin{theorem} \label{thm F-rational perfect} Let $(R,\m)$ be an equidimensional excellent F-injective local ring that is F-rational on the punctured spectrum. If $R/\m$ is perfect, then $(\q^*)^{[q]} = (\q^{[q]})^*$ for every parameter ideal $\q$ and all $q=p^e$.
\end{theorem}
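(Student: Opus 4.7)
The plan is to prove the two containments separately. The containment $(\q^*)^{[q]} \subseteq (\q^{[q]})^*$ is immediate from the definition of tight closure: if $y \in \q^*$ with test element $c$, then $c(y^q)^{q'} = c\, y^{qq'} \in \q^{[qq']} = (\q^{[q]})^{[q']}$ for $q' \gg 0$, so every generator $y^q$ of $(\q^*)^{[q]}$ lies in $(\q^{[q]})^*$. For the reverse, I would first observe that $\q^{[q]}$ is again a parameter ideal, so Corollary~\ref{cor: tight closure is vector space} applies to both $\q$ and $\q^{[q]}$, yielding $\q^F = \q$, $(\q^{[q]})^F = \q^{[q]}$, and that both $\q^*/\q$ and $(\q^{[q]})^*/\q^{[q]}$ are $R/\m$-vector spaces.

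Fix an $R/\m$-basis $u_1,\ldots,u_n$ of $\q^*/\q$, so that $(\q^*)^{[q]} = \q^{[q]} + (u_1^q,\ldots,u_n^q)$, and since $(\q^{[q]})^*/\q^{[q]}$ is annihilated by $\m$, the submodule $(\q^*)^{[q]}/\q^{[q]}$ is the $R/\m$-span of the $u_i^q$ and in particular has dimension at most $n$. The assumption that $R/\m$ is perfect enters when proving the matching lower bound: given a relation $\sum_i \lambda_i u_i^q \in \q^{[q]}$, use perfection to lift $q$-th roots of the $\overline{\lambda_i}\in R/\m$ to elements $\mu_i \in R$ with $\lambda_i - \mu_i^q \in \m$. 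Because $\m(\q^{[q]})^* \subseteq \q^{[q]}$, the error terms $(\lambda_i - \mu_i^q)u_i^q$ lie in $\q^{[q]}$, hence $\sum_i \mu_i^q u_i^q \in \q^{[q]}$. Applying the freshman's dream $\sum a_i^q = (\sum a_i)^q$, valid in characteristic $p$, this reads $w^q \in \q^{[q]}$ for $w := \sum_i \mu_i u_i$; iterating Frobenius gives $w^{qp^k} \in \q^{[qp^k]}$ for all $k \ge 0$, so $w \in \q^F = \q$. Linear independence of the $u_i$ over $R/\m$ then forces $\mu_i \in \m$, and therefore $\lambda_i \in \m$. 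Thus $\dim_{R/\m}(\q^*)^{[q]}/\q^{[q]} = n = \dim_{R/\m}\q^*/\q$.

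To close the argument, I would invoke the independence theorem of Pham Hung Quy \cite{Quy_parameter} to produce the matching upper bound $\dim_{R/\m}(\q^{[q]})^*/\q^{[q]} \le n$. Combined with the inclusion $(\q^*)^{[q]} \subseteq (\q^{[q]})^*$ of $R/\m$-vector subspaces of $(\q^{[q]})^*/\q^{[q]}$ and the dimension count from the previous paragraph, this forces $(\q^*)^{[q]} = (\q^{[q]})^*$. The step I expect to require the most care is this last one: Quy's independence statement may be naturally phrased in terms of Frobenius closure or the colon ideal $\q^{[q]}:_R (x_1\cdots x_d)^{q-1}$ rather than directly in terms of $\ell((\q^{[q]})^*/\q^{[q]})$, and some translation may be needed, using that $\HH^i_\m(R)$ has finite length for $i \ne d$, to convert it into the required length estimate.
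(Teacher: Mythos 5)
Your proposal is correct and follows essentially the same route as the paper: fix an $R/\m$-basis of $\q^*/\q$, use perfectness of the residue field to take $q$-th roots of the coefficients and reduce a relation among the $u_i^q$ to $w^q \in \q^{[q]}$, conclude $w \in \q^F = \q$ via Corollary~\ref{cor: tight closure is vector space}, and then match lengths using Quy's independence theorem, which the paper invokes exactly in the form you need, namely that $\ell\bigl((\q^{[q]})^*/\q^{[q]}\bigr)$ is independent of $q$. No translation issues arise beyond what the paper itself handles.
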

\begin{proof}
If $R$ is F-rational there is nothing to prove, so assume that $R$ is not F-rational. By Corollary~\ref{cor: tight closure is vector space} we have that $\q^*/\q$ is an $R/\m$-vector space. Let $u_1,\ldots,u_t \in \q^*$ be a basis modulo $\q$. By the Main Theorem of \cite{Quy_parameter}, we have that $\ell_R((\q^{[q]})^*/\q^{[q]})$ does not depend on $q$. 
Since $u_1^q,\ldots,u_t^q$ belong to $(\q^*)^{[q]}\subseteq (\q^{[q]})^*$, and because $(\q^{[q]})^*/\q^{[q]}$ is an $R/\m$-vector space of dimension $t = \ell_R(\q^*/\q)$, to complete the proof it suffices to show that $u_1^q,\ldots,u_t^q$ are linearly independent modulo $\q^{[q]}$. To this end, assume that $\lambda_1 u_1^q + \ldots + \lambda_t u_t^q \in \q^{[q]}$. We want to show that $\lambda_i \in \m$ for all $i$. Since the residue field is perfect, we can choose $\alpha_1,\ldots,\alpha_t \in R$ such that $\alpha_j^q\equiv  \lambda_j$ modulo $\m$. By Corollary~\ref{cor: tight closure is vector space} we have that $\m u_j^q \subseteq \m(\q^{[q]})^* \subseteq \q^{[q]}$ for all $j$, and therefore we get that $(\alpha_1 u_1 + \ldots + \alpha_t u_t)^q \equiv \lambda_1 u_1^q + \ldots + \lambda_t u_t^q  \equiv 0$ modulo $\q^{[q]}$, that is, $\alpha_1 u_1 + \ldots + \alpha_t u_t \in \q^F$. 
By Corollary \ref{cor: tight closure is vector space} we conclude that $\alpha_1 u_1 + \ldots + \alpha_t u_t \in \q$, and it follows from our choice of $u_1,\ldots,u_t$ that $\alpha_1,\ldots,\alpha_t \in \m$. Finally, this implies that $\lambda_1,\ldots,\lambda_t \in \m$, and finishes the proof.
\end{proof}

We point out that Theorem \ref{thm F-rational perfect} is not true if we do not assume F-injective, even for two-dimensional hypersurfaces with minimal multiplicity.

\begin{example}
Let $R=\FF_2\ps{x,y,z}/(x^2 + y^3 + z^5)$. Consider $\q=(y,z)$, that is a minimal reduction of $\m = (x,y,z)$. Then $R$ is not F-injective, as $x \in (y,z)^F$. Because $R$ is Gorenstein and not F-rational, we get $\q^* \supseteq \q:\m =  \m$, so that $(\q^*)^{[2]} = \m^{[2]} = \q^{[2]}$. Finally, again because $R$ is not F-rational, it follows that $(\q^{[2]})^* \supsetneq \q^{[2]}$.
\end{example}

We now want to remove the assumption that $R$ is F-rational on the punctured spectrum from Theorem \ref{THMFRationalforFInjective}. First we prove an auxiliary result.

\begin{proposition}\label{localization}
Let $R$ be a locally equidimensional Noetherian ring of characteristic $p > 0$ that is a homomorphic 
image of a Cohen-Macaulay ring. Let $x_1, \ldots, x_t$ permutable parameters in $R$ and 
$J$ be an ideal generated by monomials in $x_1,\ldots,x_t$. 
If $R$ has a test element, then tight closure commutes with localization for $J$.
\end{proposition}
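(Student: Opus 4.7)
The plan is to reduce the claim to the known fact that, under the given hypotheses, tight closure commutes with localization for parameter ideals, and then to transfer this from parameter ideals to monomial ideals via the Eagon--Hochster decomposition.

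First I would invoke \cite{EagonHochster} to write $J = J_1 \cap \cdots \cap J_s$, where each $J_\ell$ is of the form $(x_{i_1}^{a_1}, \ldots, x_{i_k}^{a_k})$ for some subset $\{i_1,\ldots,i_k\} \subseteq \{1,\ldots,t\}$ and positive integers $a_j$. Since the $x_i$ are permutable parameters, each $J_\ell$ is itself a parameter ideal. Since $J^{[q]}$ is again a monomial ideal in $x_1,\ldots,x_t$, the same argument produces $J^{[q]} = J_1^{[q]} \cap \cdots \cap J_s^{[q]}$ for every $q = p^e$.

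Next I would establish $J^* = J_1^* \cap \cdots \cap J_s^*$. The inclusion $\subseteq$ is formal. For the reverse, let $c \in R^\circ$ be a test element: if $x \in J_\ell^*$ for every $\ell$, then $c x^q \in J_\ell^{[q]}$ for all $\ell$ and all $q \geq 1$ by definition of a test element, hence $c x^q \in \bigcap_\ell J_\ell^{[q]} = J^{[q]}$, giving $x \in J^*$.

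Now, since each $J_\ell$ is a parameter ideal in a locally equidimensional homomorphic image of a Cohen--Macaulay ring, tight closure commutes with localization for $J_\ell$; thus $(J_\ell^*) R_W = (J_\ell R_W)^*$ for every multiplicatively closed set $W \subseteq R$. The ring $R_W$ inherits the hypotheses of the proposition (it is locally equidimensional, a homomorphic image of a Cohen--Macaulay ring, and the image of $c$ supplies a test element after standard adjustments), and those images of the $x_i$ which remain nonunits form a permutable system of parameters in $R_W$; those that become units contribute trivial components to the decomposition. The same reasoning applied in $R_W$ therefore yields $(J R_W)^* = \bigcap_\ell (J_\ell R_W)^*$. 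Combining, and using that finite intersections commute with localization,
\[
(J^*) R_W \;=\; \Bigl(\bigcap_\ell J_\ell^*\Bigr) R_W \;=\; \bigcap_\ell (J_\ell^*) R_W \;=\; \bigcap_\ell (J_\ell R_W)^* \;=\; (J R_W)^*.
\]

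The main obstacle is the invocation of the localization property for tight closure of parameter ideals in the stated setting; the rest is bookkeeping with the Eagon--Hochster decomposition and a verification that the ambient hypotheses descend to $R_W$. A secondary technical point is ensuring a genuine test element survives in $R_W$, which one handles by persistence of test elements (or, if necessary, by choosing $c$ not to vanish under localization).
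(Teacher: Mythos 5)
Your overall skeleton (decompose the monomial ideal into parameter ideals, use the Aberbach--Hochster--Huneke localization result for parameter ideals, and commute finite intersections with localization) is the same as the paper's, but there is a genuine gap at the very first step. You invoke Eagon--Hochster to write $J = J_1 \cap \cdots \cap J_s$ (and likewise $J^{[q]} = \bigcap_\ell J_\ell^{[q]}$) in $R$. Eagon--Hochster gives this equality only when $x_1,\ldots,x_t$ form a \emph{regular sequence}; here they are merely permutable parameters in a ring that is not assumed Cohen--Macaulay (and in the application, $R$ is just an excellent normal domain), so in general one only has the containment $J \subseteq \bigcap_\ell J_\ell$. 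Your proof of $J^* = \bigcap_\ell J_\ell^*$, and the whole repetition of the argument in $R_W$, rests on this unjustified equality, so the reverse inclusion $\bigcap_\ell J_\ell^* \subseteq J^*$ is left unproved. This is exactly the point the paper has to work around: it replaces the missing ideal-theoretic equality by the tight-closure-level inclusion $\bigcap_i J_i^* \subseteq J^*$, obtained by comparing with the polynomial ring $\FF_p[X_1,\ldots,X_t] \to R$, $X_i \mapsto x_i$, and citing Hochster--Huneke (Theorems 7.9 and 7.12 of \cite{HoHu1}): there is $c \in R^\circ$ with $c\bigl(\bigcap_i J_i\bigr)^{[q]} \subseteq J^{[q]}$ for $q \gg 0$, which suffices because the source ring is regular.

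A second, smaller issue: when you rerun the argument in $R_W$ you need a test element in $R_W$, and test elements do not automatically localize (one needs locally stable ones or a separate argument); your ``standard adjustments'' remark does not settle this. The paper's proof avoids both problems by running a one-sided chain of containments entirely from data in $R$:
\[
(S^{-1}J)^* \subseteq \Bigl(\bigcap_i S^{-1}J_i\Bigr)^* \subseteq \bigcap_i (S^{-1}J_i)^* = \bigcap_i S^{-1}(J_i^*) = S^{-1}\Bigl(\bigcap_i J_i^*\Bigr) \subseteq S^{-1}(J^*) \subseteq (S^{-1}J)^*,
\]
using only $J \subseteq \bigcap_i J_i$, the AHH localization theorem for the parameter ideals $J_i$, the inclusion $\bigcap_i J_i^* \subseteq J^*$ from Hochster--Huneke, and persistence of tight closure under localization. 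If you add the Hochster--Huneke input to supply $\bigcap_\ell J_\ell^* \subseteq J^*$ and restructure the localization step as a chain of containments rather than redoing the decomposition in $R_W$, your argument becomes the paper's.
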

\begin{proof}
Let $S$ be a multiplicatively closed set. In our assumptions, tight closure commutes with localization at $S$ for any parameter ideal by \cite[Theorem~5.4, Theorem~5.21]{AHH}. In order to apply \cite[Theorem~5.21]{AHH} note that if $\q$ is a parameter ideal, then $\hght(\q R/P) = \hght(\q)$ for all minimal primes $P$ of $R$ because $R$ is locally equidimensional.

Now let $A=\FF_p[X_1,\ldots,X_t]$ be a polynomial ring, and consider the ring map $A \to R$ sending $X_i \mapsto x_i$. Let $U_1,\ldots,U_s$ be monomials in $A$ such that $J = (U_1,\ldots,U_s)R$. Let $I_1,\ldots,I_r$ be irreducible monomial ideals of $A$ such that $I = I_1 \cap \ldots \cap I_r$, and set $J_i = I_iR$ for $i=1,\ldots,r$. One always has $J \subseteq \bigcap_{i=1}^r J_i$, and equality holds if $x_1,\ldots,x_t$ forms a regular sequence \cite{EagonHochster}. On the other hand, by \cite[Theorem~7.9]{HoHu1} there exists $c \in R^\circ$ such that $c\left(\bigcap_{i=1}^r J_i\right)^{[q]} \subseteq \left(\left(\bigcap_{i=1}^r I_i\right)R\right)^{[q]} = J^{[q]}$ for all $q \gg 0$. By \cite[Theorem~7.12]{HoHu1}, using that $A$ is regular, we get that $\bigcap_{i=1}^r J_i^* \subseteq J^*$ (see also \cite[Corollary 1.3]{Quy_parameter_decomp} for the case in which $J$ is the power of a parameter ideal). Since each $J_i$ is generated by a system of parameters, we finally get
\begin{align*}
(S^{-1}J)^* & \subseteq \left (S^{-1}\bigcap_{i = 1}^r J_i \right)^* = \left (\bigcap_{i = 1}^r S^{-1}J_i \right)^*  \\ & \subseteq \bigcap_{i = 1}^r (S^{-1}J_i)^* 
= \bigcap_{i = 1}^r S^{-1}(J_i)^* \\ & = S^{-1} \left ( \bigcap_{i = 1}^r J_i^* \right)  \subseteq S^{-1}J^* \subseteq (S^{-1}J)^*. \qedhere
\end{align*}
\end{proof}

\begin{theorem} \label{ThmRationalParIdeals} Let $(R,\m)$ be an excellent normal domain. 
The following conditions are equivalent:
\begin{enumerate}
\item $R$ is F-rational;
\item $(\q_1^{[p]})^F = (\q_1^F)^{[p]}$ and $(\q_1\q_2)^* = \q_1^*\q_2^*$ for all ideals $\q_1 \subseteq \q_2$ generated by a system of parameters;
\item $R$ is F-injective and $(\q_1\q_2)^* = \q_1^*\q_2^*$ for all ideals $\q_1 \subseteq \q_2$ generated by a system of parameters;
\item $R$ is F-injective and $(\q\q^{[q]})^* = \q^*(\q^{[q]})^*$ for all ideals $\q$ generated by a system of parameters, and all $q=p^e$.
\end{enumerate}
\end{theorem}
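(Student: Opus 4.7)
The plan is to prove the four conditions equivalent by the cycle $(1)\Rightarrow(2)\Rightarrow(3)\Rightarrow(4)\Rightarrow(1)$. Two of the intermediate steps are immediate: for $(3)\Rightarrow(4)$, specialize $\q_1:=\q^{[q]}$ and $\q_2:=\q$, which satisfy $\q_1\subseteq\q_2$ automatically; and $(2)\Rightarrow(3)$ is exactly Proposition~\ref{PropFInjectiveFrobenius}, which converts the Frobenius-closure hypothesis into F-injectivity.

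For $(1)\Rightarrow(2)$: F-rationality implies $R$ is Cohen--Macaulay and F-injective, so by Corollary~\ref{cor: tight closure is vector space} every parameter ideal is Frobenius closed, trivially verifying the Frobenius-closure hypothesis. For the product formula, I would take $u\in(\q_1\q_2)^*$; since $\q_1\subseteq\q_2$ and both are tightly closed, $u\in\q_1^*\cap\q_2^*=\q_1$, so writing $\q_1=(x_1,\ldots,x_s)$ I can express $u=\sum a_i x_i$. The containment $cu^q\in\q_1^{[q]}\q_2^{[q]}$, combined with the characteristic-$p$ identity $u^q=\sum a_i^q x_i^q$, yields a vanishing relation $\sum_i(ca_i^q-r_i)x_i^q=0$ with $r_i\in\q_2^{[q]}$. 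Since $x_1^q,\ldots,x_s^q$ is a regular sequence in the Cohen--Macaulay ring $R$, the Koszul syzygies force each coefficient into $\q_1^{[q]}$, so $ca_i^q\in\q_1^{[q]}+\q_2^{[q]}=\q_2^{[q]}$ (using $\q_1\subseteq\q_2$). F-rationality then gives $a_i\in\q_2^*=\q_2$, so $u\in\q_2\q_1=\q_1\q_2$.

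The heart of the proof is $(4)\Rightarrow(1)$, which I would prove by induction on $d=\dim R$. The base case $d\leq 1$ is immediate since an excellent normal local domain of dimension at most one is regular. For the inductive step, the strategy is to establish F-rationality on the punctured spectrum of $R$ and then apply Theorem~\ref{THMFRationalforFInjective}. For each non-maximal prime $P$, the ring $R_P$ is excellent, normal, of dimension strictly less than $d$, inherits F-injectivity (F-injectivity descends to localizations in our setting), and satisfies condition $(4)$: each parameter ideal of $R_P$ has the form $\q R_P$ for a parameter ideal $\q$ of $R$ by prime avoidance, and the equality $(\q\q^{[q]})^*=\q^*(\q^{[q]})^*$ descends to $R_P$ via Proposition~\ref{localization} applied to the monomial ideal $\q\q^{[q]}$, together with the Aberbach--Hochster--Huneke localization theorem for the parameter ideals $\q$ and $\q^{[q]}$ separately. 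The inductive hypothesis gives that $R_P$ is F-rational, hence $R$ is F-rational on the punctured spectrum, and Theorem~\ref{THMFRationalforFInjective} closes the argument.

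The main obstacle I foresee is the inductive step of $(4)\Rightarrow(1)$: ensuring that F-injectivity descends to every localization $R_P$ and that condition $(4)$ transfers cleanly through the combination of the two localization results for monomial ideals in parameters and for parameter ideals themselves. The Koszul argument in $(1)\Rightarrow(2)$ is relatively clean once the key observation $(\q_1\q_2)^*\subseteq\q_1\cap\q_2=\q_1$ is in place.
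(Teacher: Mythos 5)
Your cycle $(1)\Rightarrow(2)\Rightarrow(3)\Rightarrow(4)\Rightarrow(1)$ and the arguments for $(1)\Rightarrow(2)$ (the Koszul/regular-sequence argument after observing $(\q_1\q_2)^*\subseteq\q_1^*\cap\q_2^*=\q_1$), $(2)\Rightarrow(3)$ and $(3)\Rightarrow(4)$ match the paper's proof. The problem is in $(4)\Rightarrow(1)$. Your induction needs the \emph{inductive hypothesis}, i.e.\ the full statement of the theorem, to apply to $R_P$, and hence you must verify the product condition for \emph{every} parameter ideal of $R_P$. You dispose of this by asserting that ``each parameter ideal of $R_P$ has the form $\q R_P$ for a parameter ideal $\q$ of $R$ by prime avoidance,'' but this does not follow from prime avoidance and is a genuine gap: if $y_1,\ldots,y_t\in P$ generate a parameter ideal of $R_P$, the ideal $(y_1,\ldots,y_t)R$ may well have height $<t$ in $R$, because its height is computed over minimal primes that need not be contained in $P$; so the given generators need not be parameters in $R$, and producing a different $t$-generated height-$t$ ideal of $R$ with the same extension to $R_P$ is a nontrivial avoidance-plus-Nakayama problem (over the residue field $\kappa(P)$), not a one-line prime avoidance. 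Prime avoidance only goes the other way: it produces \emph{some} partial system of parameters of $R$ of length $\operatorname{ht}(P)$ inside $P$, not a lift of an arbitrary parameter ideal of $R_P$.

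The good news is that this detour is unnecessary, and removing it essentially recovers the paper's argument. Theorem~\ref{THMFRationalforFInjective} only requires the equality $(\q\q^{[q]})^*=\q^*(\q^{[q]})^*$ for \emph{some} $\m$-primary parameter ideal, so at a prime $\p$ you only need one well-chosen ideal: take $x_1,\ldots,x_t\in\p$, $t=\operatorname{ht}(\p)$, a partial system of parameters of $R$ (this \emph{is} prime avoidance), set $\q=(x_1,\ldots,x_t)$, and transfer the hypothesis $(\q\q^{[q]})^*=\q^*(\q^{[q]})^*$ to $R_\p$ via Proposition~\ref{localization} (which applies to $\q$, $\q^{[q]}$ and the monomial ideal $\q\q^{[q]}$, using that an excellent local ring is a homomorphic image of a Cohen--Macaulay ring and has a test element), together with F-injectivity of $R_\p$. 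The paper then gets ``F-rational on the punctured spectrum of $R_\p$'' not by induction on dimension but by choosing $\p$ minimal in the non-F-rational locus, which is open by V\'elez; your induction on $\dim R$ would serve the same purpose provided you restructure it so that the inductive step invokes Theorem~\ref{THMFRationalforFInjective} directly at $R_P$ (with the single localized ideal $\q R_P$) rather than invoking the theorem's statement, i.e.\ condition~(4), for $R_P$. As written, the step claiming condition~(4) for $R_P$ is where the proof breaks.
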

\begin{proof}
If $R$ is F-rational, then $(\q_1^{[p]})^F \subseteq (\q_1^{[p]})^* = \q_1^{[p]} \subseteq (\q_1^F)^{[p]} \subseteq (\q_1^{[p]})^F$ shows the first equality of (2). For the second claim, let $\alpha \in (\q_1\q_2)^*$, and observe that $\alpha \in (\q_1 \cap \q_2)^* \subseteq \q_1^* \cap \q_2^*  = \q_1 \cap \q_2 = \q_1$. If we let $\q_1=(x_1,\ldots,x_t)$, then we can write $\alpha = \sum_{i=1}^t a_i x_i$. Moreover, by assumption there exists $c \ne 0$ such that $c\alpha^q \in (\q_1\q_2)^{[q]}$ for all $q \gg 0$. That is, we can write $\sum_{i=1}^t ca_i^qx_i^q = \sum_{i=1}^t r_ix_i^q$ for some $r_i \in \q_2^{[q]}$. Since $R$ is F-rational, it is Cohen-Macaulay, and therefore the elements $x_1^q,\ldots,x_t^q$ form a regular sequence. From above we conclude that $ca_i^q \in \q_2^{[q]} + (x_1,\ldots,\widehat{x_i},\ldots,x_t)^{[q]} \subseteq \q_2^{[q]}$, where we used that $x_j^q \in \q_1^{[q]} \subseteq \q_2^{[q]}$ for all $j=1,\ldots,t$. As this holds for all $q \gg 0$, we conclude that $a_i \in \q_2^* = \q_2$ for all $i=1,\ldots,d$, and thus $\alpha \in \q_1\q_2$. 

(2) implies (3) by Proposition \ref{PropFInjectiveFrobenius}, and (3) clearly implies (4). Now we assume (4). By way of contradiction we assume that $R$ is not F-rational. By \cite[Theorem~3.5]{Velez} the F-rational locus of $R$ is open. We fix $\p$ 
to be a minimal prime of the non F-rational locus, so that $R_\p$ is F-rational on the punctured spectrum but not F-rational itself. 
We can find a partial system of parameters $x_1,\ldots,x_t$ inside $\p$, where $t={\rm ht}(\p)$. Let $\q$ be the ideal generated by these elements. By assumption we have that $(\q\q^{[q]})^* = \q^*(\q^{[q]})^*$ for all $q=p^e$. Because an excellent local ring is a homomorphic image of a Cohen-Macaulay ring (\cite{Kawasaki}), by Proposition~\ref{localization} this condition is preserved after localizing at $\p$. 
Moreover, since $R$ is F-injective, $R_\p$ is also F-injective by \cite[Proposition~3.3]{DattaMurayama}.

We now reduced to the case in which $R_\p$ is an F-injective ring which is F-rational on the punctured spectrum and the parameter ideal $\a=\q R_\p$ in $R_\p$ and is such that $(\a\a^{[q]})^* = \a^*(\a^{[q]})^*$ for all $q=p^e$. Theorem~\ref{THMFRationalforFInjective} implies that $R_\p$ is F-rational, a contradiction.
\end{proof}

In trying to find analogies between Theorem~\ref{ThmRationalParIdeals} and \cite[Theorem 1]{CutRat}, one might wonder whether the condition $(IJ)^*= I^*J^*$ \emph{for all} ideals $I,J \subseteq R$ may be equivalent to $R$ being F-rational. This is actually not the case: the following argument was shown to us in a private communication by Brenner, Huneke, Mukundan, and Verma. 
\begin{theorem} \label{BHMV}
Let $(R,\m)$ be an F-rational local ring of dimension $d$. The following are equivalent:
\begin{enumerate}
\item $R$ is weakly F-regular,
\item $(IJ)^*=I^*J^*$ for all ideals $I,J \subseteq R$,
\item $(IJ)^* = I^*J$ for all ideals $I$ and all parameter ideals $J$.
\end{enumerate}
\end{theorem}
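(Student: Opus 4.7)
The plan is to deal with the three implications in decreasing difficulty. The implication (1) $\Rightarrow$ (2) is immediate: if $R$ is weakly F-regular, every ideal is tightly closed, and in particular $(IJ)^* = IJ = I^*J^*$. For (2) $\Rightarrow$ (3), it suffices to observe that when $J$ is a parameter ideal, $J^*=J$ by F-rationality, so specializing (2) yields (3). All the content is in (3) $\Rightarrow$ (1).

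For (3) $\Rightarrow$ (1), my first step would be to reduce to showing that every $\m$-primary ideal is tightly closed. This follows from Krull's intersection theorem: for an arbitrary ideal $I$ and $x\in I^{*}$, one has $x\in (I+\m^{n})^{*}$ for every $n$; if every $\m$-primary ideal is tightly closed, then $x\in I+\m^{n}$ for all $n$, and intersecting gives $x\in I$. So I may restrict to the $\m$-primary case.

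Given an $\m$-primary ideal $I$ and $x \in I^{*}$, I would fix an $\m$-primary parameter ideal $J\subseteq I$ (a minimal reduction of $I$, possibly after passing to an infinite residue field via faithfully flat extension). Iterating (3) gives $(IJ^{n})^{*} = I^{*}J^{n}$ for every $n\geq 1$, and the same iteration applied to $J$ itself shows that every power $J^{n}$ is tightly closed. Moreover, since $I$ is integral over $J$, one has $I^{r}=J^{r-r_{0}}I^{r_{0}}$ for $r\gg 0$, and applying (3) repeatedly yields
\[
(I^{r})^{*} = J^{r-r_{0}}\,(I^{r_{0}})^{*}\qquad\text{for every }r\geq r_{0}.
\]
The strategy is then to combine this ``eventual-power'' control with an Epstein-style decomposition $(I^{*})^{[q_{0}]}\subseteq I^{[q_{0}]}+(I^{[q_{0}]})^{*sp}$ and reduce the problem to showing $(I^{[q_{0}]})^{*sp}\subseteq I^{[q_{0}]}$. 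For $y$ in the special tight closure, $y^{q_{1}}\in (\m\,I^{[q_{0}q_{1}]})^{*}$; choosing $q_{2}$ with $\m^{[q_{2}]}\subseteq J$ and using (3) together with F-rationality of $R$ to identify $J^{*}=J$, one would obtain
\[
y^{q_{1}q_{2}}\in \bigl(J\,I^{[q_{0}q_{1}q_{2}]}\bigr)^{*} = J\,\bigl(I^{[q_{0}q_{1}q_{2}]}\bigr)^{*}.
\]

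The main obstacle is the final step: promoting the containment $y^{q_{1}q_{2}}\in J\cdot (I^{[q_{0}q_{1}q_{2}]})^{*}$ to $y^{q_{1}q_{2}}\in I^{[q_{0}q_{1}q_{2}]}$. In the parameter-ideal situation of Theorem~\ref{THMFRationalforFInjective}, this passage is made possible by Corollary~\ref{cor: tight closure is vector space}, which provides $\m\q^{*}\subseteq \q$ for parameter $\q$; but for a general $\m$-primary ideal $I$ this vector-space behavior is not available a priori. The hard part will be to bootstrap back to $I^{[q_{0}q_{1}q_{2}]}$ by a further iteration of (3) (or by exploiting the tight-closedness of $I^{*}J^{n}$ and a Krull-type intersection argument), thereby converting ``$I^{*}J^{n}$ controls $I^{*}$'' into genuine tight closedness of $I$ itself. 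Once this bootstrap is completed, Epstein's decomposition collapses and one concludes $I^{*}=I$, giving weak F-regularity.
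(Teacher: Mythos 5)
There is a genuine gap, and you have named it yourself: the entire content of (3) $\Rightarrow$ (1) lies in the step you leave open, namely passing from $y^{q_1q_2}\in J\,\bigl(I^{[q_0q_1q_2]}\bigr)^*$ to $y^{q_1q_2}\in I^{[q_0q_1q_2]}$. In the parameter-ideal argument of Theorem~\ref{THMFRationalforFInjective} this passage rests on Corollary~\ref{cor: tight closure is vector space}, i.e.\ on $\m\q^*\subseteq\q$ for parameter ideals, which in turn uses F-injectivity, equidimensionality, and F-rationality on the punctured spectrum; none of this transfers to an arbitrary $\m$-primary ideal $I$, and no analogue of the vector-space control is available for $(I^{[q]})^*$. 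Moreover, the Epstein-style decomposition $(I^*)^{[q_0]}\subseteq I^{[q_0]}+(I^{[q_0]})^{*sp}$ requires hypotheses (analytic irreducibility and agreement of residue fields with the normalization, or excellence-type assumptions) that are not part of the statement here, where $R$ is only assumed to be an F-rational local ring. So the proposal is a plan modeled on the proof of Theorem~\ref{THMFRationalforFInjective} rather than a proof, and the missing bootstrap is exactly the hard point.

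The paper's argument avoids special tight closure entirely and is elementary: one proves that every $\m$-primary ideal is tightly closed by induction on the number of generators $n=\mu(I)\geq d=\dim R$. If $n=d$ then $I$ is a parameter ideal and is tightly closed by F-rationality. If $n>d$, prime avoidance lets one choose generators $x_1,\ldots,x_n$ of $I$ so that $J=(x_1,\ldots,x_d)$ is a parameter ideal and $K=(x_1,\ldots,x_{d-1},x_{d+1},\ldots,x_n)$ is still $\m$-primary; by induction $K^*=K$. For $z\in I^*$ pick $c\in R^\circ$ with $cz^q\in I^{[q]}$ for $q\gg 0$; multiplying by $x_1^q$ gives $c(x_1z)^q\in x_1^qI^{[q]}\subseteq J^{[q]}K^{[q]}$, hence $x_1z\in (JK)^*=JK^*=JK$ by hypothesis (3). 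Writing $x_1z=\sum_{i=1}^d a_ix_i$ with $a_i\in K\subseteq I$ and using that $x_1,\ldots,x_d$ is a regular sequence, one gets $z\in (a_1,x_2,\ldots,x_d)\subseteq I$. Note how (3) is used only once per induction step, applied to the product of the fixed parameter ideal $J$ with the smaller ideal $K$; this is the device that replaces your attempted bootstrap, and it is where you should redirect the argument.
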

\begin{proof}
The implications (1) $\Rightarrow$ (2) $\Rightarrow$ (3) are straightforward. Assume (3). In order to show that $R$ is weakly F-regular it suffices to prove that every $\m$-primary ideal is tightly closed. Let $I$ be an $\m$-primary ideal, we induct on $n = \mu(I) \geq d = \dim(R)$. If $n=d$ then $I$ is a parameter ideal, and hence it is tightly closed since $R$ is F-rational. Assume $n>d$. We claim that, we can find minimal generators $x_1,\ldots,x_n$ of $I$ such that $J=(x_1,\ldots,x_d)$ is a parameter ideal, and $K=(x_1,\ldots,x_{d-1},x_{d+1},\ldots,x_n)$ is still $\m$-primary. In fact, choose any parameter ideal $J=(x_1,\ldots,x_d)$ inside $I$. Observe that $I \not\subseteq J+\m I$, othwerwise we would have $I=J$ by Nakayama's lemma, and thus $n=d$. Moreover, for any minimal prime $\p$ of $(x_1,\ldots,x_{d-1})$, we have $I \not\subseteq \p$, since otherwise $\hght(I) \leq \hght(\p) =d-1$. It follows by prime avoidance that we can find $x_{d+1} \in I \smallsetminus (J +\m I)$ such that $x_{d+1} \notin \p$ for all minimal primes $\p$ of $(x_1,\ldots,x_{d-1})$. The latter condition ensures that $(x_1,\ldots,x_{d-1},x_{d+1})$ is $\m$-primary. Moreover, our choice of $x_{d+1}$ guarantees that the residue class $\overline{x_{d+1}} \in I/\m I$ does not belong to the $R/\m$-vector subspace spanned by $\overline{x_1},\ldots,\overline{x_d}$. By Nakayama's lemma this implies that $x_1,\ldots,x_d,x_{d+1}$ are part of a minimal generating set for $I$. Now complete $\overline{x_1},\ldots,\overline{x_d},\overline{x_{d+1}}$ to a $R/\m$-basis of $I/\m I$ with elements $\overline{x_{d+2}},\ldots,\overline{x_n}$, and the elements $x_1,\ldots,x_d,x_{d+1},\ldots,x_n$ satisfy the desired properties.

Now note that $\mu(K)<n$, so by induction we have that $K^*=K$. Let $z \in I^*$. There exists $c \in R^\circ$ such that $cz^q \in I^{[q]}$ for all $q \gg 0$. Multiplying by $x_1^q$ we get that $cx_1^qz^q \in x_1^qI^{[q]} \subseteq J^{[q]}K^{[q]}$. It follows that $x_1z \in (JK)^* = JK^* = JK$. Write $x_1z = \sum_{i=1}^d a_i x_i$ with $a_i \in K \subseteq I$, so that $(z-a_1)x_1 \in (x_2,\ldots,x_d)$. Since $x_1,\ldots,x_d$ is a regular sequence, we conclude that $z \in (a_1,x_2,\ldots,x_d) \subseteq I$, as desired.
\end{proof}

\begin{corollary} Let $(R,\m)$ be an excellent F-injective normal local ring. Then $R$ is weakly F-regular if and only if $(IJ)^* = I^*J^*$ for all $\m$-primary ideals $I,J \subseteq R$.
\end{corollary}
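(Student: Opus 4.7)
The forward implication is immediate, since weak F-regularity gives $(IJ)^* = IJ = I^*J^*$ for any pair. For the reverse I would proceed in two steps: first establish F-rationality, then deduce weak F-regularity by adapting the induction in the proof of Theorem~\ref{BHMV}(3)$\Rightarrow$(1).

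Granted that $R$ is F-rational, Krull's intersection theorem reduces weak F-regularity to the statement that every $\m$-primary ideal $I$ is tightly closed, which I would prove by induction on $\mu(I)\ge d$. The base case $\mu(I)=d$ is an $\m$-primary parameter ideal, tightly closed by F-rationality. For the inductive step, prime avoidance supplies generators $x_1,\ldots,x_n$ of $I$ with $J=(x_1,\ldots,x_d)$ a parameter ideal and $K=(x_1,\ldots,x_{d-1},x_{d+1},\ldots,x_n)$ an $\m$-primary ideal with $\mu(K)=n-1$. Both $J$ and $K$ are $\m$-primary, so the hypothesis yields $(JK)^*=J^*K^*$, which equals $JK$ by F-rationality and the inductive assumption $K^*=K$. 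For $z\in I^*$ the standard manipulation $cx_1^q z^q\in x_1^q I^{[q]}\subseteq J^{[q]}K^{[q]}=(JK)^{[q]}$ gives $x_1 z\in(JK)^*=JK$, and the regular-sequence property of $x_1,\ldots,x_d$ (from Cohen-Macaulayness of an excellent F-rational ring) forces $z\in I$.

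To establish F-rationality I would invoke Theorem~\ref{THMFRationalforFInjective}: since $\q^{[q]}$ is $\m$-primary whenever $\q$ is, the hypothesis immediately gives $(\q\q^{[q]})^*=\q^*(\q^{[q]})^*$ for every $\m$-primary parameter ideal $\q$ and every $q=p^e$, so it only remains to show that $R$ is F-rational on its punctured spectrum. A natural route is Noetherian induction on $\dim R$: for $\p\ne\m$ the localization $R_\p$ is F-injective (by \cite[Proposition~3.3]{DattaMurayama}), normal, and excellent of strictly smaller dimension, so if $R_\p$ satisfied the product hypothesis the inductive hypothesis would give $R_\p$ weakly F-regular, hence F-rational. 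This is where I expect the principal obstacle: every $\m$-primary ideal of $R$ becomes the unit ideal upon localization at a non-maximal prime, so the hypothesis of the corollary does not visibly descend to $R_\p$. An alternative is to verify condition~(4) of Theorem~\ref{ThmRationalParIdeals} for partial parameter ideals; given a partial $\q=(x_1,\ldots,x_t)$, approximating it by the $\m$-primary ideals $\q_N=(x_1,\ldots,x_t,x_{t+1}^N,\ldots,x_d^N)$ and using a standard test-element plus Krull-intersection argument yields $\bigcap_N(\q_N\q_N^{[q]})^*=(\q\q^{[q]})^*$, and hence via the hypothesis also $\bigcap_N \q_N^*(\q_N^{[q]})^*=(\q\q^{[q]})^*$. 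However, the final comparison with $\q^*(\q^{[q]})^*$ — which would close the argument — is precisely the equality one is trying to establish, and does not follow formally from the approximation; circumventing this circularity is the crux of the proof.
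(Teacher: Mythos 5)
Your easy direction and your second step are correct and match the paper: the paper deduces weak F-regularity from F-rationality by citing Theorem~\ref{BHMV}, whose proof of (3)$\Rightarrow$(1) is exactly the induction you reproduce, and you are right that this induction only consumes the hypothesis for $\m$-primary ideals $J,K$. The genuine gap is the one you flag yourself: you never establish that $R$ is F-rational. The paper's route for that step is neither of the two you try. It applies Theorem~\ref{ThmRationalParIdeals} ((3) or (4) $\Rightarrow$ (1)), whose proof rules out non-F-rationality as follows: take a minimal prime $\p$ of the closed non-F-rational locus, choose a \emph{partial} system of parameters of length $\hght(\p)$ inside $\p$, localize the equality $(\q\q^{[q]})^*=\q^*(\q^{[q]})^*$ at $\p$ using Proposition~\ref{localization} (tight closure commutes with localization for ideals generated by monomials in parameters), note $R_\p$ is F-injective and F-rational on its punctured spectrum, and conclude with Theorem~\ref{THMFRationalforFInjective}. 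This argument consumes the product condition for parameter ideals that are \emph{not} $\m$-primary --- precisely because such ideals localize well, unlike the $\m$-primary ideals of the corollary's hypothesis. Your Noetherian induction on $\dim R$ fails for the reason you give (the hypothesis does not descend to $R_\p$), and your $\q_N$-approximation fails because intersection does not distribute over the products $\q_N^*(\q_N^{[q]})^*$, as you also note; you supply no third route, so the implication ``product condition for $\m$-primary ideals $\Rightarrow$ F-rational'' remains unproved in your write-up.

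Two further comments. If the non-F-rational locus were contained in $\{\m\}$, your argument would close, since then the $\m$-primary case of the hypothesis plus Theorem~\ref{THMFRationalforFInjective} yields F-rationality; it is only a possible non-maximal minimal prime of the non-F-rational locus that you cannot handle, and that is exactly the case the paper treats via partial parameter ideals and Proposition~\ref{localization}. You have in fact located the only non-routine point: the paper's proof of the corollary is a one-line citation of Theorems~\ref{ThmRationalParIdeals} and~\ref{BHMV}, and invoking Theorem~\ref{ThmRationalParIdeals} literally requires the product condition for partial (non-$\m$-primary) parameter ideals, a bridge from the $\m$-primary hypothesis that the paper does not spell out (it is immediate if the hypothesis is read as in Theorem~\ref{BHMV}(2), i.e., for all ideals $I,J$). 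Still, as a proof proposal yours is incomplete: the crux you identify is asserted, not resolved.
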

\begin{proof}
The proof is a straightforward application of Theorem \ref{ThmRationalParIdeals} and Theorem \ref{BHMV}. 
\end{proof}

\section{Ending remarks and questions}

In light of Theorem \ref{ThmRationalParIdeals}, Pham Hung Quy asked to us the following question.
\begin{question} \label{Quy} If $(R,\m)$ is F-rational, and $\q_1,\q_2$ are parameter ideals, is $\q_1\q_2$ tightly closed? 
\end{question}
With the aid of Macaulay 2 \cite{Mac2} we are able to give a negative answer to Question \ref{Quy}.

\begin{example} Let $K=\FF_2(u,v)$ and $S=K[x,y,z]/(x^2+uy^2+vz^2)$, with the standard grading. Finally, let $R=S^{(2)}$, the second Veronese subring of $R$. By \cite{GSS}, $R$ is F-rational but not F-pure. It can be checked using Macaulay 2 that $\q_1=(x^2,y^2)R$ and $\q_2=(xy,z^2)R$ are parameter ideals in $R$, and that $yz^3 \in R \smallsetminus \q_1\q_2$. However, 
\begin{align*}
(yz^3)^2 & = y^2z^4(v^{-1}x^2+uv^{-1}y^2) = v^{-1}x^2y^2(v^{-2}x^4+u^2v^{-2}y^4) + uv^{-1}y^4z^4 \\
& = v^{-3}(x^3y)^2 + u^2v^{-3}(xy^3)^2 + uv^{-1}(y^2z^2)^2 \in (\q_1\q_2)^{[2]},
\end{align*}
and it follows that $\q_1\q_2$ is not even Frobenius closed.
\end{example}

As already mentioned, Cutkosky proved that for an analytically normal surface having rational singularities is equivalent to the square of every integrally closed $\m$-primary ideal being integrally closed \cite[Theorem 1]{CutRat}. While Theorem \ref{BHMV} shows that products of integrally closed and tightly closed ideals have substantially different behavior in terms of characterizing rational and F-rational surface singularities, one might still hope that powers behave more similarly. We do not know at the moment whether this is true or not:
\begin{question} \label{Q1} Let $(R,\m)$ be a two-dimensional excellent F-injective analytically normal local ring. Is being F-rational equivalent to the condition $(I^2)^* = (I^*)^2$ for all $\m$-primary ideals $I \subseteq R$? 
\end{question}

\begin{remark}
We point out that the condition that $(\q^2)^* = (\q^*)^2$ for all ideals $\q \subseteq R$ generated by a full system of parameters does not imply that $R$ is F-rational. For example, take $R$ to be a two-dimensional 
Gorenstein pseudo-rational F-injective  local ring. We point out that such a ring needs not be F-rational; for instance, take $R=\FF_2\ps{x,y,z}/(z^2+x^2y+xy^2+xyz)$. Let $\q=(f,g)$ be a parameter ideal of $R$.
Since the test ideal $\tau(R)$ equals $\m$ we have $I^* \subseteq I:\m$ 
for any $I$. Furthermore, one always has $(\q^*)^2 \subseteq (\q^2)^*$, so
\[
(\q: \m)^2 = (\q^*)^2 \subseteq (\q^2)^* \subseteq \q^2 : \m,
\]
Now, if we let $\q:\m=(f,g,u)$, then observe that $(f^2,g):\m=(f^2,g,u f)$ and $(f,g^2):\m = (f,g^2,u g)$. From the injective map $0 \to R/\q^2 \to R/(f^2,g) \oplus R/(f,g^2)$, one gets that the socle of $R/\q^2$ is contained in the socle of $R/(f^2,g) \oplus R/(f,g^2)$, and this implies that $\q^2:\m \subseteq (\q^2,u f,u g)$. Thus $\q^2:\m \subseteq (\q:\m)^2$, and we have equality $(\q^2)^* = (\q^*)^2$.

On a positive side, a similar argument shows that if $R$ is a Cohen-Macaulay then $\q^2$ is tightly closed if and only if $\q$ is tightly closed, i.e., $R$ is F-rational.
\end{remark}

We now make another remark: combining the Theorem of Huneke--Itoh \cite{HunekeStable,Itoh} with the aforementioned result of Lipman--Teissier on the Brian{\c c}on--Skoda Theorem, we know that a two-dimensional normal local ring  is pseudo-rational if and only if $\overline{\q^2} \subseteq \q$ for every ideal $\q$ generated by a system of parameters. In light of this equivalence, and keeping in mind the tight closure approach to the Brian{\c c}on--Skoda Theorem \cite{HoHu1,AberbachHuneke}, another natural question arises: is being F-rational equivalent to $(\q^2)^* \subseteq \q$ for every ideal $\q$ generated by a system of parameters? However, this is false because one can take any pseudo-rational but not F-rational surface singularity, and exploit the containment $(\q^2)^* \subseteq \overline{\q^2}$. For a concrete example, take $R=\FF_2\ps{x,y,z}/(z^2+x^2y+xy^2+xyz)$ as above. 

\begin{question} Smith proved that F-rational excellent local rings are pseudo-rational \cite{SmithFRational}. Is it possible to use Theorem \ref{ThmRationalParIdeals} to find an alternative proof of this result? The two-dimensional case might be more approachable in view of the various characterizations we have recalled in terms of complete ideals.
\end{question}

\bibliographystyle{alpha}
\bibliography{References}

\end{document}